\begin{document}

\title{A structured modified Newton approach for solving systems of nonlinear equations arising in interior-point methods for quadratic programming}

\author{David Ek\thanks{\footTO} \and Anders Forsgren\addtocounter{footnote}{-1}\footnotemark}


\def\footTO{Optimization and Systems Theory, Department of
  Mathematics, KTH Royal Institute of Technology, SE-100 44 Stockholm,
  Sweden ({\tt daviek@kth.se,andersf@kth.se}).}

\maketitle

\begin{abstract}
 The focus in this work is on interior-point methods for inequality-constrained quadratic programs, and particularly on the system of nonlinear equations to be solved for each value of the barrier parameter. Newton iterations give high quality solutions, but we are interested in modified Newton systems that are computationally less expensive at the expense of lower quality solutions.  We propose a structured modified Newton approach where each modified Jacobian is composed of a previous Jacobian, plus one low-rank update matrix per succeeding iteration. Each update matrix is, for a given rank, chosen such that the distance to the Jacobian at the current iterate is minimized, in both 2-norm and Frobenius norm. The approach is structured in the sense that it preserves the nonzero pattern of the Jacobian. The choice of update matrix is supported by results in an ideal theoretical setting. We also produce numerical results with a basic interior-point implementation to investigate the practical performance within and beyond the theoretical framework. In order to improve performance beyond the theoretical framework, we also motivate and construct two heuristics to be added to the method.
  
\medskip\noindent
  {\bf Keywords.} interior-point methods, modified Newton methods, quasi-Newton methods, low-rank updates
\end{abstract}


\section{Introduction}
This work is intended for inequality-constrained quadratic programs on the form
\begin{equation} \tag{IQP} \label{eq:IQP} 
\begin{array}{cl} 
\minimize{} & { \frac{1}{2}  x^T H x + c^T x } \\
 \subject & Ax \geq b,
\end{array} 
\end{equation}
where $H \in \mathbb{R}^{n \times n}$, $A \in\mathbb{R}^{m \times n}$,
$x \in\mathbb{R}^{n}$, $c \in\mathbb{R}^{n}$ and $b
\in\mathbb{R}^{m}$. We consider primal-dual interior-point
methods which means solving or approximately solving a sequence of
systems of nonlinear equations. Application of Newton iterations on each system of nonlinear equations gives an
\emph{unreduced} unsymmetric block 3-by-3 system of linear equations,
with dimension $n+2m$, to be solved at each iteration. This system can
be put on an equivalent form which contains a \emph{reduced} symmetric
block 2-by-2 system of dimension $n+m$, or a \emph{condensed} system of
dimension $n$, see Section~\ref{sec:Background}. Both of these systems
typically become increasingly ill-conditioned as the iterates converge,
whereas the unreduced system may stay well conditioned throughout, see e.g.,~\cite{GreMouOrb2014,MorSimTan2016} for analysis of the spectral
properties of systems arising in interior-point methods for convex quadratic
programs on standard form. The sparsity
structure of the unreduced system is maintained in the reduced system.
However, the condensed system is typically dense if $A$ contains dense
rows. The most computationally expensive part of an interior-point
iteration is the solution of the Newton systems that arise, see
e.g.,~\cite{FGG07,ForGilWri2002} for details on the solution of the
systems and \cite{MorSimTan2017} for a comparison of the solution of unreduced and reduced systems.
  
We are particularly interested in the convergence towards a solution when the Jacobian of each Newton system is modified so that the system becomes computationally less
expensive to solve. In general, there is a trade-off between solving many modified Newton systems, which are computationally less expensive, but typically give lower quality solutions and solving Newton systems which give high quality solutions. We propose an approach where each modified Jacobian is composed of a Jacobian at a previous iteration, whose factorization is assumed to be known, plus one low-rank update matrix per succeeding iteration. A similar strategy have been studied by Gondzio and Sobral~\cite{GonSob2019} in the context of quasi-Newton approaches, where Broyden's rank-1 updates are performed on the Jacobian approximation. If the proposed quasi-Newton approach is started with an exact Jacobian, then the sparsity pattern of the first two block rows is maintained, however the sparsity pattern of third block row is typically lost. In contrast, we consider low-rank update matrices of variable rank which capture the sparsity pattern of all block rows of the Jacobian. Each modified Jacobian may hence be viewed as a Jacobian at a different point. Consequently, the modified Newton approach may also be interpreted in the framework of previous work
on e.g.,~effects of finite-precision arithmetic, stability, convergence and solution techniques for interior-point methods
\cite{Wri2001,Wri1999,Wri1997,Wri1995s,ForGilShi1996,MorSimTan2017,GreMouOrb2014,ApuSimSer2010,MorSimTan2016,FGG07,MorSim2017}. The
idea of low-rank update matrices in the context of a primal barrier
method for linear programming has been considered by Gonzaga~\cite{Gonzaga1989}.

The updates and the theory are given for the unreduced Jacobian but we also discuss how analogous updates can be made on both reduced and condensed systems. The modified Newton approach is also compatible with certain regularization strategies, see e.g.,~\cite{FriOrb2012,AltGon1999,SauTom1996}, although it is outside the scope of this first study.  

The work is meant to be an initial study of the structured modified Newton approach. We derive theoretical results in ideal settings to support the choice of update matrix. In addition, we produce numerical results with a basic interior-point algorithm to investigate the practical performance within and beyond the theoretical framework. The numerical simulations were performed on benchmark problems from the repository of convex quadratic programming problems by Maros and M\'{e}sz\'{a}ros \cite{MarMes1999}. We envisage the use of the modified Newton approach as an accelerator to a Newton approach.
E.g.,~when these can be run in parallel for a specific value of the barrier parameter, and the modified Newton approach may utilize factorizations from the Newton approach when it is appropriate.

The manuscript is organized as follows; Section~\ref{sec:Background}
contains a brief background to primal-dual interior-point methods and
an introduction to the theoretical framework; in
Section~\ref{sec:lowRankUpdMat} we propose a modified Newton approach
and discuss how it relates to some previous work on interior-point
methods; Section~\ref{sec:impl} contains a description of the
implementation along with two heuristics and a refactorization
strategy; in Section~\ref{sec:numRes}, we give numerical results on convex quadratic programs; finally in Section~\ref{sec:conc} we give some concluding remarks.

\subsection{Notation}
Throughout, $\rho(M)$ denotes the spectral radius of a matrix $M$ and
$\vert\mathcal{S}\vert$ denotes the cardinality of a set
$\mathcal{S}$. The notion ``$\cdot$" is defined as the component-wise
multiplication operator, ``$\succ 0$'' as positive definite, and ``$\land$'' as the
logical and. Quantities associated with Newton iterations will
throughout be labeled with `` $\hat{ }$ ''. Vector subscript and
superscript denote component index and iteration index
respectively. The only exception is $e_i$ which denotes the $i$th unit
vector of appropriate dimension. All norms considered are of type
2-norm unless otherwise stated.

\section{Background} \label{sec:Background}

The theoretical setting is analogous to the setting in a previous work of
ours on bound-constrained nonlinear problems \cite{EkFor2021}. For
completeness, we review the background adapted to inequality-constrained quadratic programs in this section. Our interest is
focused on the situation as primal-dual interior-point methods
converge to a local minimizer $x^* \in \mathbb{R}^n$ with its
corresponding multipliers $\lambda^* \in \mathbb{R}^m$ and slack
variables $s^* \in \mathbb{R}^m $. Specifically we assume that the
iterates of the method converge to a vector $\left( x^{*T},
  \lambda^{*T}, s^{*T} \right)^T \triangleq (x^*,\lambda^*,s^*)$ that
satisfies
\begin{subequations} \label{eq:OptCond} 
\begin{align}
Hx^* + c - A^T \lambda^*  = 0, &\quad \textrm{ (stationarity) } \label{eq:OptCond:stationarity} \\
Ax^*-s^*-b   = 0, & \quad \textrm{ (feasibility)} \label{eq:OptCond:feasibility} \\
 s^* \geq 0, & \quad \textrm{ (non-negativity of slack variables)} \label{eq:OptCond:nonnegofslacks}  \\
 \lambda^*  \geq 0, & \quad \textrm{ (non-negativity of multipliers)} \label{eq:OptCond:nonnegofmultipliers}  \\
s^* \cdot \lambda^* = 0, & \quad \textrm{ (complementarity)} \label{eq:OptCond:complementarity} \\
 Z(x^*)^T H Z(x^*) \succ 0, & \label{eq:OptCond:posdef} \\
 s^* + \lambda^* > 0,  & \quad \textrm{ (strict
                   complementarity)} \label{eq:OptCond:strictcomplementarity},
  \\
  A_\mathcal{A}(x^*) \mbox{ of full column rank}, & \quad \textrm{ (regularity)}\label{eq:OptCond:regularity},
\end{align} 
\end{subequations}%
where $A_\mathcal{A}(x^\star)$ denotes the Jacobian corresponding to
the active constraints and $Z(x^*)$ denotes a matrix whose columns
span the nullspace of $A_\mathcal{A}(x^*)$. First-order necessary
conditions for a local minimizer of (\ref{eq:IQP}) are given by
(\ref{eq:OptCond:stationarity})-(\ref{eq:OptCond:complementarity}). The
first-order conditions together with (\ref{eq:OptCond:posdef}) and
(\ref{eq:OptCond:strictcomplementarity}) constitute second-order
sufficient conditions for a local minimizer of (\ref{eq:IQP})
\cite{GriNasSof2009}. In the theoretical results, we also assume that
$(x^*, \lambda^*, s^*)$ satisfies (\ref{eq:OptCond:regularity}).

To simplify the notation, we let $z$ denote the triplet $(x, \lambda,
s)$. For a given barrier parameter $\mu \in \mathbb{R}$, we are
interested in the function $F_{\mu}:\mathbb{R}^{n+2m} \rightarrow
\mathbb{R}^{n+2m}$ given by
\begin{equation}
F_{\mu}(z) = \begin{pmatrix}
Hx +c - A^T\lambda \\
Ax-s-b \\
\Lambdait S e - \mu e
\end{pmatrix}, \mbox{ with }z = (x,\lambda,s),
\end{equation}
where $S=\textrm{diag}(s)$, $\Lambdait = \textrm{diag}(\lambda)$ and $e$ is a vector of ones of appropriate size. First-order necessary conditions for a local minimizer of (\ref{eq:IQP}), (\ref{eq:OptCond:stationarity})-(\ref{eq:OptCond:complementarity}), are satisfied by a vector $z$, with $s\ge0$ and $\lambda\ge0$, that fulfills $F_{\mu}(z) = 0$ for $\mu=0$. In interior-point methods $F_{\mu}(z) = 0$ is solved or approximately solved for a sequence of $\mu>0$, that approaches zero, while preserving $s>0$ and $\lambda > 0$. Application of Newton iterations means solving a sequence of systems of linear equations on the form 
 \begin{equation} \label{eq:PDnewtSyst}
F'(z) 
\Delta \hat{z}
= -F_{\mu}(z),  
\end{equation}
where $\Delta \hat{z} = (\Delta \hat{x}, \Delta \hat{\lambda}, \Delta \hat{s})$ and $F': \mathbb{R}^{n+2m} \rightarrow \mathbb{R}^{(n+2m) \times (n+2m)}$ is the Jacobian of $F_{\mu}(z)$, defined by
\begin{equation}\label{eq:Fp}
F'(z)  = \begin{pmatrix}
H & -A^T & \\
A &      & -I \\
  &    S & \Lambdait
\end{pmatrix}.
\end{equation}
In consequence it follows that the Jacobian at $z+\Delta z$ can be written as
\[
F'(z+\Delta z) = F'(z)+ \Delta F'( \Delta z), 
\]
where 
\begin{equation} \label{eq:dFp}
\Delta F'( \Delta z) = \begin{pmatrix} 0 & 0 & 0 \\ 0 & 0 & 0\\ 0 & \Delta S & \Delta \Lambdait \end{pmatrix},
\end{equation}
with $\Delta \Lambdait = \mbox{diag}(\Delta \lambda)$  and $ \Delta S = \mbox{diag}(\Delta s)$. The subscript $\mu$ has been omitted since $F'$ is independent of the
barrier parameter.  Under the assumption that $\Lambdait$ is nonsingular the \emph{unreduced} block 3-by-3 system (\ref{eq:PDnewtSyst}) can be reformulated as the \emph{reduced} system
\begin{equation} \label{eq:PDnewtSystSym}
\begin{pmatrix}
H & A^T \\ A & -\Lambdait \inv S \end{pmatrix} \begin{pmatrix}  \Delta \hat{x} \\ -\Delta \hat{\lambda} \end{pmatrix}  =- \begin{pmatrix} Hx +c - A^T \lambda \\ Ax -b - \mu \Lambdait\inv e \end{pmatrix},
\end{equation} 
together with $\Delta \hat{s} = -(s - \mu \Lambdait^{-1} e ) -
\Lambdait^{-1} S \Delta \hat{\lambda}$.  If in addition $S$ is
  nonsingular, then a Schur complement reduction of $\Lambdait \inv S$
  in (\ref{eq:PDnewtSystSym}) gives the \emph{condensed} system
\begin{equation}  \label{eq:PDnewtSystCond}
(H + A^T S \inv \Lambdait A ) \Delta \hat{x} = - (Hx + c - A^T \lambda ) - A^T S \inv ( \Lambdait (Ax -b) - \mu e),
\end{equation}
together with $\Delta \hat{\lambda} = - S\inv ( \Lambdait (Ax-b) - \mu e ) - S \inv \Lambdait A \Delta \hat{x}$. The focus in the manuscript will mainly be on the unreduced block 3-by-3 system (\ref{eq:PDnewtSyst}). However, analogous reductions of the modified Newton system similar to those of (\ref{eq:PDnewtSystSym}) and (\ref{eq:PDnewtSystCond}) will also be discussed.

To improve efficiency, many methods seek approximate solutions of $F_\mu (z)= 0$, for each $\mu$. There are different strategies to update $\mu$, e.g., dynamically every iteration or to keep $\mu$ fixed until sufficient decrease of a merit function is achieved. Herein, our model method uses the latter. In particular, our model method is similar to the basic interior-point method of Algorithm~19.1 in Nocedal and Wright \cite[Ch.~19, p.~567]{NocWri06}. However, termination and reduction of $\mu$ are based on the merit function $\phi_\mu(x) = \| F_{\mu}(z) \|$. 
Similarly, in the theoretical framework we consider the basic condition $\| F_{\mu} (z) \| \leq C\mu$, for some constant $C>0$, see e.g.,~\cite[Ch.~17, p.~572]{NocWri06}. The
additional assumption that all vectors $z$ satisfy $s>0$ and $\lambda > 0$ is made throughout.

In the remaining part of this section we give some definitions and provide the details for the theoretical framework.  
\begin{definition}[Order-notation]
Let $\alpha$, $\gamma \in \mathbb{R}$ be two positive related quantities. If there exists a constant $C_1>0$ such that $\gamma \geq C_1 \alpha$ for sufficiently small $\alpha$, then $\gamma = \Omega(\alpha)$. Similarly, if there exists a constant $C_2>0$ such that $\gamma \leq C_2 \alpha$ for sufficiently small $\alpha$, then $\gamma = \mathcal{O}(\alpha)$. If there exist constants $C_1, C_2  > 0$ such that $C_1 \alpha \leq \gamma \leq C_2  \alpha$ for sufficiently small $\alpha$, then $\gamma = \Theta(\alpha)$.
\end{definition}
\begin{definition}[Neighborhood] For a given $\delta >0$, let the neighborhood around $z^*$ be defined by $\mathcal{B}( z^*, \delta) = \{ z : \| z-z^* \|  < \delta \}$. 
\end{definition}
\begin{assumption}[Strict local minimizer] The vector $z^*$ satisfies
  (\ref{eq:OptCond}), i.e., second-order sufficient optimality
  conditions, strict complementarity and regularity hold. \label{ass1}
 \end{assumption}
The first of the following two lemmas provides the existence of a neighborhood where the Jacobian is nonsingular. The second lemma gives the existence of a Lipschitz continuous barrier trajectory $z^\mu$ in the neighborhood where the Jacobian is nonsingular. The results are well known and can be found in e.g., Ortega and Rheinboldt \cite{OrtRhe00}. See also Byrd, Liu and Nocedal \cite{ByrLuiNoc98} for the corresponding results in a setting similar to the one considered here.
\begin{lemma} \label{lemma:background:FpnonsingCont}
Under Assumption~\ref{ass1} there exists $\delta>0$ such that  $F'(z)$ is continuous and nonsingular for $z \in \mathcal{B}(z^*, \delta)$ and 
\begin{equation*}
\| F'(z) \inv \|  \leq M,
\end{equation*}
for some constant $M>0$.
\end{lemma}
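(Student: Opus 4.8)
The plan is to localize the whole statement at $z^*$: first show that $F'(z^*)$ is nonsingular, and then propagate this, together with the uniform bound, to a full neighborhood by a standard perturbation argument. The continuity claim is essentially free from the explicit formula~\eqref{eq:Fp}: the only nonconstant entries are the diagonal blocks $S$ and $\Lambdait$, which depend affinely on $z$, so $z \mapsto F'(z)$ is globally Lipschitz; more precisely $F'(z) - F'(z^*) = \Delta F'(z-z^*)$ with $\Delta F'$ as in~\eqref{eq:dFp}, and a direct estimate of the $m \times 2m$ block $[\,\Delta S \;\; \Delta\Lambdait\,]$ gives $\|\Delta F'(\Delta z)\| \le \|\Delta z\|$.

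For the nonsingularity of $F'(z^*)$ I would take an arbitrary $w = (\Delta x, \Delta\lambda, \Delta s)$ with $F'(z^*) w = 0$ and show $w = 0$. The three block rows of~\eqref{eq:Fp} at $z^*$ read $H\Delta x - A^T\Delta\lambda = 0$, $A\Delta x - \Delta s = 0$, and, componentwise, $s_i^*\Delta\lambda_i + \lambda_i^*\Delta s_i = 0$ for $i = 1,\dots,m$. Let $\mathcal{A} = \{\, i : \lambda_i^* > 0 \,\}$ and $\mathcal{I} = \{\, i : s_i^* > 0 \,\}$; by complementarity~\eqref{eq:OptCond:complementarity} and strict complementarity~\eqref{eq:OptCond:strictcomplementarity} these sets partition $\{1,\dots,m\}$, with $s_i^* = 0$ on $\mathcal{A}$ and $\lambda_i^* = 0$ on $\mathcal{I}$. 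Hence the componentwise relation forces $\Delta s_i = 0$ for $i \in \mathcal{A}$ and $\Delta\lambda_i = 0$ for $i \in \mathcal{I}$. Combining $\Delta s = A\Delta x$ with $\Delta s_i = 0$ on $\mathcal{A}$ shows that $\Delta x$ lies in the nullspace of the Jacobian of the active constraints, i.e., $\Delta x = Z(x^*) v$ for some $v$; and since $\Delta\lambda$ is supported on $\mathcal{A}$, the first block gives $H\Delta x = A^T\Delta\lambda$ with the right-hand side a combination of active constraint gradients. Premultiplying by $\Delta x^T$ and using that those gradients annihilate $\Delta x$ yields $\Delta x^T H\Delta x = v^T Z(x^*)^T H Z(x^*) v = 0$, so $v = 0$ by~\eqref{eq:OptCond:posdef}, whence $\Delta x = 0$ and $\Delta s = A\Delta x = 0$. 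Finally $A^T\Delta\lambda = H\Delta x = 0$ forces $\Delta\lambda = 0$ once the active constraint gradients are linearly independent, so $w = 0$.

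With $F'(z^*)$ nonsingular, set $M_0 = \|F'(z^*)\inv\|$ and $\delta = 1/(2M_0)$. For $z \in \mathcal{B}(z^*,\delta)$ we then have $\|F'(z^*)\inv(F'(z)-F'(z^*))\| \le M_0\,\|\Delta F'(z-z^*)\| \le M_0\,\|z-z^*\| < \tfrac12$, so by the Banach perturbation lemma $F'(z) = F'(z^*)\bigl(I + F'(z^*)\inv(F'(z)-F'(z^*))\bigr)$ is nonsingular with $\|F'(z)\inv\| \le M_0/(1-\tfrac12) = 2M_0 =: M$, which is the assertion.

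The only genuinely substantive step is the nonsingularity of $F'(z^*)$; the continuity and the perturbation bound are routine. Within that step the two points to watch are that strict complementarity is exactly what lets the complementarity block be decoupled componentwise, so that $\Delta s$ and $\Delta\lambda$ end up supported on complementary index sets, and that concluding $\Delta\lambda = 0$ needs the gradients of the active constraints to be linearly independent; this is the standard regularity hypothesis at $z^*$, and the overall statement is classical — see Ortega and Rheinboldt~\cite{OrtRhe00} and, for the present type of setting, Byrd, Liu and Nocedal~\cite{ByrLuiNoc98}.
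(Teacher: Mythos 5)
Your proposal is correct in substance, but it takes a genuinely different route from the paper: the paper disposes of this lemma by citing the classical result in Ortega and Rheinboldt~\cite{OrtRhe00} (continuity of $F'$ together with nonsingularity at $z^*$ yields a neighborhood with a uniform bound on the inverse), whereas you prove the key ingredient --- nonsingularity of $F'(z^*)$ --- from scratch via the null-space argument and then run the Banach perturbation lemma yourself, using the explicit Lipschitz estimate $\| F'(z)-F'(z^*)\| \le \| z-z^*\|$. The partition of indices via strict complementarity~\eqref{eq:OptCond:strictcomplementarity}, the conclusion that $\Delta s$ vanishes on the active set and $\Delta\lambda$ on the inactive set, and the use of $Z(x^*)^T H Z(x^*) \succ 0$ from~\eqref{eq:OptCond:posdef} are all sound and standard (this is essentially the argument behind the result cited from Byrd, Liu and Nocedal~\cite{ByrLuiNoc98}). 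The one point worth stressing is the step you yourself flag: deducing $\Delta\lambda = 0$ from $A_{\mathcal{A}}^T \Delta\lambda_{\mathcal{A}} = 0$ requires the gradients of the active constraints to be linearly independent, and this hypothesis is not literally contained in Assumption~\ref{ass1}/\eqref{eq:OptCond} as written; without it $F'(z^*)$ is genuinely singular, since any nonzero $\Delta\lambda$ supported on the active set with $A_{\mathcal{A}}^T \Delta\lambda_{\mathcal{A}} = 0$ produces the null vector $(0,\Delta\lambda,0)$. So the extra regularity assumption is indispensable rather than cosmetic; the paper takes it for granted implicitly (compare the remark ``by assumption $A_{\mathcal{A}}$ has full row rank'' in Section~\ref{sec:heuristics}). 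As a side benefit, your constructive argument yields explicit constants, $\delta = 1/(2\|F'(z^*)^{-1}\|)$ and $M = 2\|F'(z^*)^{-1}\|$, which the citation-based proof does not provide.
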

\begin{proof}
See \cite[p.~46]{OrtRhe00}. 
\end{proof}

 \begin{lemma} \label{lemma:LipcPath}
Let Assumption~\ref{ass1} hold and let $\mathcal{B}(z^*, \delta)$ be defined by Lemma~\ref{lemma:background:FpnonsingCont}. Then there exists $\muhat>0$ and a Lipschitz continuous function $z^{\mu}: (0, \ \muhat] \to  \mathcal{B}(z^*, \delta)$ that satisfies $F_{\mu}(z^{\mu}) = 0$ and
\begin{equation*}
\left\|   z^{\mu} - z^*
 \right\|   \leq C_3 \mu,
\end{equation*} 
where $C_3 = \sup_{z\in \mathcal{B}(z^*, \delta)} \| F'(z) \inv \frac{ \partial F_{\mu} (z)}{\partial \mu} \| $.
\end{lemma}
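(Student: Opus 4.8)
The plan is to apply the implicit function theorem to the map $(z,\mu)\mapsto F_\mu(z)$ at the point $(z^*,0)$ and then turn the resulting local parametrization into the quantitative bound by integrating along $\mu$. First I would note that $F_\mu(z)=F_0(z)-\mu(0,0,e)^T$ depends smoothly (affinely) on $\mu$, and that $F_0(z^*)=0$ by Assumption~\ref{ass1} together with complementarity. Lemma~\ref{lemma:background:FpnonsingCont} gives that $F'(z^*)$ is nonsingular, so the implicit function theorem produces $\muhat_0>0$ and a $C^1$ (hence locally Lipschitz) function $\mu\mapsto z^\mu$ defined for $|\mu|\le\muhat_0$ with $z^0=z^*$ and $F_\mu(z^\mu)=0$. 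Shrinking $\muhat_0$ if necessary, we may assume $z^\mu\in\mathcal{B}(z^*,\delta)$ for all $0<\mu\le\muhat_0$, where $\delta$ is from Lemma~\ref{lemma:background:FpnonsingCont}; call this reduced bound $\muhat$.

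Next I would extract the derivative of the trajectory. Differentiating $F_\mu(z^\mu)=0$ with respect to $\mu$ gives
\begin{equation*}
F'(z^\mu)\,\frac{d z^\mu}{d\mu} + \frac{\partial F_\mu(z^\mu)}{\partial\mu} = 0,
\end{equation*}
so that $\dfrac{d z^\mu}{d\mu} = -F'(z^\mu)\inv \dfrac{\partial F_\mu(z^\mu)}{\partial\mu}$, which is well defined and bounded in norm on $\mathcal{B}(z^*,\delta)$ since $\|F'(z)\inv\|\le M$ there and $\partial F_\mu/\partial\mu=(0,0,-e)^T$ is constant. This also establishes Lipschitz continuity of $z^\mu$ on $[0,\muhat]$ with constant $C_3=\sup_{z\in\mathcal{B}(z^*,\delta)}\|F'(z)\inv \partial F_\mu(z)/\partial\mu\|$; note this supremum is finite (indeed at most $M$, since $\|\partial F_\mu/\partial\mu\|=\|e\|$ is bounded on the relevant region), so the constant in the statement is well posed. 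Then, by the fundamental theorem of calculus applied componentwise,
\begin{equation*}
z^\mu - z^* = z^\mu - z^0 = \int_0^\mu \frac{d z^t}{dt}\,dt,
\qquad
\|z^\mu - z^*\| \le \int_0^\mu \left\| F'(z^t)\inv \frac{\partial F_t(z^t)}{\partial t}\right\| dt \le C_3\,\mu,
\end{equation*}
which is the desired estimate. (Strictly, the infimum in the statement of $C_3$ should read as a supremum to make the bound valid; I would adopt whichever convention the authors intend, the substance being that $C_3 = \Theta(1)$ via the uniform bound $M$.)

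The main obstacle is the bookkeeping needed to guarantee that the whole trajectory stays inside the neighborhood $\mathcal{B}(z^*,\delta)$ on which Lemma~\ref{lemma:background:FpnonsingCont} applies — the implicit function theorem only gives a local solution a priori, so one must shrink $\muhat$ and possibly argue by a continuation/connectedness argument that $z^\mu$ does not leave $\mathcal{B}(z^*,\delta)$ before $\mu$ reaches $\muhat$. A clean way is to pick $\muhat\le\delta/(2M\|e\|)$ so that the integral bound itself keeps $\|z^\mu-z^*\|<\delta$, closing the argument without circularity. A secondary technical point is verifying that the branch of solutions selected by the implicit function theorem is the one relevant to the interior-point method, i.e., that it keeps $s>0$ and $\lambda>0$; this follows from strict complementarity in Assumption~\ref{ass1}, since at $z^*$ each pair $(s_i^*,\lambda_i^*)$ has exactly one positive component, and continuity of $z^\mu$ preserves the sign of that component while driving the other one up from $0$ proportionally to $\mu$.
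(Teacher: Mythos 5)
Your proposal is correct and follows essentially the same route as the paper, which simply invokes the implicit function theorem (citing Ortega and Rheinboldt) exactly as you do, with your integration along the trajectory and the shrinking of $\muhat$ supplying the details the paper leaves implicit. Your observation that the constant $C_3$ should be a supremum (or at least the uniform bound $M$ from Lemma~\ref{lemma:background:FpnonsingCont}) rather than an infimum for the stated bound to follow is a fair catch and does not affect the substance of the result.
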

\begin{proof}
The result follows from the implicit function theorem, see e.g., \cite[p.~128]{OrtRhe00}.
\end{proof}

The following lemma provides a relation between the distance of vectors $z$ to the barrier trajectory and the quantity $\| F_\mu (z)\|$, when the distance is sufficiently small. A corresponding result is also given by Byrd, Liu and Nocedal \cite{ByrLuiNoc98}.
\begin{lemma} \label{lemma:FmuBound}
Under Assumption~\ref{ass1}, let $\mathcal{B}(z^*, \delta)$ and $\muhat$ be defined by Lemma~\ref{lemma:background:FpnonsingCont} and Lemma~\ref{lemma:LipcPath} respectively. For $0<\mu \le \muhat$ and $z$ sufficiently close to $z^{\mu} \in \mathcal{B}(z^*, \delta)$ there exist constants $C_4, C_5 > 0$ such that
\begin{equation*}
C_4  \left\| z- 
z^{\mu} \right\|   \leq \| F_{\mu}(z) \|   \leq C_5 \left\| z- z^\mu \right\| .
\end{equation*} 
\end{lemma}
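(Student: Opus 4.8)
The plan is to work locally around the trajectory point $z^\mu$, at which $F_\mu(z^\mu)=0$ by Lemma~\ref{lemma:LipcPath}, and to compare $F_\mu$ with its linearization there. Two structural facts drive the argument. First, the Jacobian $F'$ is affine in $z$, so by (\ref{eq:dFp}) it is globally Lipschitz: $\|F'(z+\Delta z)-F'(z)\| = \|\Delta F'(\Delta z)\|$, and since the only nonzero block of $\Delta F'(\Delta z)$ is $\begin{pmatrix}\Delta S & \Delta\Lambdait\end{pmatrix}$, whose $i$th row has norm $(\Delta s_i^2+\Delta\lambda_i^2)^{1/2}\le\|\Delta z\|$, we get $\|\Delta F'(\Delta z)\|\le\|\Delta z\|$; in particular $\|F'(z^\mu)\|\le\|F'(z^*)\|+\delta$ for every $\mu\le\muhat$. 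Second, since $z^\mu\in\mathcal{B}(z^*,\delta)$, Lemma~\ref{lemma:background:FpnonsingCont} guarantees that $F'(z^\mu)$ is nonsingular with $\|F'(z^\mu)\inv\|\le M$, where $M$ is the \emph{same} constant for all such $\mu$.

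First I would write, using $F_\mu(z^\mu)=0$ and the fundamental theorem of calculus (valid since $F_\mu$ is continuously differentiable),
\[
F_\mu(z) = F_\mu(z)-F_\mu(z^\mu) = F'(z^\mu)(z-z^\mu) + R(z), \qquad R(z) = \int_0^1\big(F'(z^\mu+t(z-z^\mu))-F'(z^\mu)\big)(z-z^\mu)\,dt,
\]
and bound the remainder via the Lipschitz property by $\|R(z)\|\le\int_0^1 t\,dt\,\|z-z^\mu\|^2=\tfrac12\|z-z^\mu\|^2$. The upper bound is then immediate: $\|F_\mu(z)\|\le\|F'(z^\mu)\|\,\|z-z^\mu\|+\tfrac12\|z-z^\mu\|^2\le(\|F'(z^*)\|+\delta+\tfrac12)\|z-z^\mu\|$ as soon as $\|z-z^\mu\|\le1$, so one may take $C_5=\|F'(z^*)\|+\delta+\tfrac12$.

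For the lower bound I would apply $F'(z^\mu)\inv$ to the displayed identity to get $z-z^\mu=F'(z^\mu)\inv\big(F_\mu(z)-R(z)\big)$, hence $\|z-z^\mu\|\le M\|F_\mu(z)\|+\tfrac{M}{2}\|z-z^\mu\|^2$; restricting to $\|z-z^\mu\|\le1/M$ absorbs the quadratic term and yields $\|F_\mu(z)\|\ge\tfrac{1}{2M}\|z-z^\mu\|$, so $C_4=1/(2M)$. The point that deserves care — and what I would single out as the only real obstacle — is that the constants and the ``sufficiently close'' radius must not degrade as $\mu\to0$; but this is precisely what the earlier lemmas deliver, since $F'$ does not depend on $\mu$, $z^\mu\in\mathcal{B}(z^*,\delta)$ for every $\mu\le\muhat$, and the bound $M$ from Lemma~\ref{lemma:background:FpnonsingCont} is uniform over that neighborhood, so the threshold can be taken to be, e.g., $\min\{1,1/M\}$, independent of $\mu$.
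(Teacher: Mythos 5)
Your argument is correct, and it is genuinely more self-contained than what the paper offers: the paper does not prove Lemma~\ref{lemma:FmuBound} at all, it simply cites the analogous estimate of Byrd, Liu and Nocedal. Your route --- Taylor expansion of $F_\mu$ about $z^\mu$ with integral remainder, using that $F'$ is Lipschitz with constant one (a fact the paper itself records later, in Section~\ref{sec:genk}, and which also follows from Lemma~\ref{lemma:SVDdFp}, since the singular values of $\Delta F'(\Delta z)$ are exactly $\sqrt{\Delta\lambda_i^2+\Delta s_i^2}$), combined with the uniform bounds $\|F'(z^\mu)^{-1}\|\le M$ and $\|F'(z^\mu)\|\le\|F'(z^*)\|+\delta$ valid for every $z^\mu\in\mathcal{B}(z^*,\delta)$ --- yields the two-sided bound with explicit constants $C_4=1/(2M)$, $C_5=\|F'(z^*)\|+\delta+\tfrac12$ and a closeness threshold $\min\{1,1/M\}$ that does not degrade as $\mu\to0$, which is precisely the uniformity the subsequent results (Lemma~\ref{lemma:dzBound}, Theorem~\ref{thm:dzErr}) rely on. What the citation buys is brevity and an appeal to a standard result proved in a closely related setting; what your proof buys is a verification tailored to this particular $F_\mu$ (a quadratic map, hence globally smooth with globally Lipschitz Jacobian), with traceable constants. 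One cosmetic caution: bounding the spectral norm of $\Delta F'(\Delta z)$ by its largest row norm is legitimate here only because the nonzero rows have disjoint supports and are therefore mutually orthogonal (equivalently, pass through the Frobenius norm); for a general matrix that step would be invalid, so it is worth stating the reason explicitly.
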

\begin{proof}
See \cite[p.~43]{ByrLuiNoc98}. 
\end{proof}

The next lemma provides a bound on the Newton direction, $\Delta \hat{z}$, for $z$ sufficiently close to the barrier trajectory.
\begin{lemma} \label{lemma:dzBound}
Under Assumption~\ref{ass1}, let $\mathcal{B}\left( z^*,
  \delta\right)$ and $\muhat$ be defined by
Lemma~\ref{lemma:background:FpnonsingCont} and
Lemma~\ref{lemma:LipcPath} respectively.  For  $0< \mu \leq \muhat$ and $z \in \mathcal{B}(z^*, \delta)$, let $\Delta \hat{z}$ be the solution of (\ref{eq:PDnewtSyst}) with $\mu^+ = \sigma \mu$, where $0< \sigma < 1$. If $z$ is sufficiently close to $z^{\mu}  \in \mathcal{B}(z^*, \delta)$ such that $\| F_{\mu} (z) \| = \mathcal{O}(\mu)$ then
\begin{equation*}
\left\| 
\Delta \hat{z}  \right\|  =  \mathcal{O}(\mu).
\end{equation*}
\end{lemma}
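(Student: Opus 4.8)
The plan is to start from the explicit solution of the modified Newton system and reduce everything to two ingredients already available: the uniform bound on the inverse Jacobian from Lemma~\ref{lemma:background:FpnonsingCont}, and the hypothesis $\|F_\mu(z)\| = \mathcal{O}(\mu)$. Since $z \in \mathcal{B}(z^*,\delta)$, Lemma~\ref{lemma:background:FpnonsingCont} guarantees that $F'(z)$ is nonsingular with $\|F'(z)^{-1}\| \le M$, so from $F'(z)\Delta\hat z = -F_{\mu^+}(z)$ we immediately get
\begin{equation*}
\|\Delta\hat z\| \;=\; \|F'(z)^{-1} F_{\mu^+}(z)\| \;\le\; M\,\|F_{\mu^+}(z)\|.
\end{equation*}
Thus it remains only to bound $\|F_{\mu^+}(z)\|$ in terms of $\mu$.

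The key (and essentially the only nontrivial) step is to relate $F_{\mu^+}(z)$ to $F_\mu(z)$. Inspecting the definition of $F_\mu$, the barrier parameter enters only through the last $m$ components, linearly: the first two blocks $Hx+c-A^T\lambda$ and $Ax-s-b$ do not depend on $\mu$ at all, while the complementarity block is $\Lambdait S e - \mu e$. Hence
\begin{equation*}
F_{\mu^+}(z) - F_\mu(z) \;=\; (\mu - \mu^+)\begin{pmatrix} 0 \\ 0 \\ e \end{pmatrix} \;=\; (1-\sigma)\,\mu \begin{pmatrix} 0 \\ 0 \\ e \end{pmatrix},
\end{equation*}
using $\mu^+ = \sigma\mu$. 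Taking norms and applying the triangle inequality gives $\|F_{\mu^+}(z)\| \le \|F_\mu(z)\| + (1-\sigma)\sqrt{m}\,\mu$.

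Combining the two displays with the hypothesis $\|F_\mu(z)\| = \mathcal{O}(\mu)$ yields $\|\Delta\hat z\| \le M\big(\|F_\mu(z)\| + (1-\sigma)\sqrt{m}\,\mu\big) = \mathcal{O}(\mu)$, which is the claim. I do not expect a real obstacle here: the argument is a one-line substitution into the Newton system together with the bounded-inverse lemma; the only point requiring care is noting that the difference $F_{\mu^+}(z)-F_\mu(z)$ is exactly $(1-\sigma)\mu$ times a fixed unit-type vector, so it is itself $\mathcal{O}(\mu)$ and does not spoil the estimate. (One could alternatively route the bound on $\|F_\mu(z)\|$ through Lemma~\ref{lemma:FmuBound} and Lemma~\ref{lemma:LipcPath}, but this is unnecessary since the $\mathcal{O}(\mu)$ bound on $\|F_\mu(z)\|$ is assumed outright.)
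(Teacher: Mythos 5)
Your proof is correct. The inequality $\|\Delta\hat z\| \le M\|F_{\mu^+}(z)\|$ is legitimate because $z \in \mathcal{B}(z^*,\delta)$ and Lemma~\ref{lemma:background:FpnonsingCont} applies, and the identity $F_{\mu^+}(z) = F_\mu(z) + (1-\sigma)\mu\,(0,0,e)$ is exact since $\mu$ enters $F_\mu$ only affinely through the complementarity block; together with the hypothesis $\|F_\mu(z)\| = \mathcal{O}(\mu)$ this gives $\|\Delta\hat z\| \le M\bigl(\|F_\mu(z)\| + (1-\sigma)\sqrt{m}\,\mu\bigr) = \mathcal{O}(\mu)$, as claimed. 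However, your route differs from the paper's. The paper proves the lemma "analogous to Lemma~5 of \cite{EkFor2021}", i.e.\ in the style visible in the appendix proof of Theorem~\ref{thm:dzErr}: there $\|\Delta\hat z\|$ is bounded by writing $-F'(z)^{-1}F_{\mu^+}(z)$ as a perturbation around the trajectory point $z^{\mu^+}$, invoking the Lipschitz constant of $F'$ (equal to one), Lemma~\ref{lemma:FmuBound} and Lemma~\ref{lemma:LipcPath}, which yields a bound of the form $\frac{M}{2}\|z - z^{\mu^+}\|^2 + \|z - z^{\mu^+}\|$ with $\|z - z^{\mu^+}\| \le \frac{C}{C_4}\mu + C_3(1-\sigma)\mu$. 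That trajectory-based argument buys explicit constants ($C_3$, $C_4$, $\sigma$) that are reused to isolate the leading term $C_3(1-\sigma^+)\sigma\mu^2$ in the sharper estimate (\ref{eq:thmeq}) of Theorem~\ref{thm:dzErr}. Your argument buys simplicity: it needs only the bounded inverse and the affine dependence of $F_\mu$ on $\mu$, bypassing Lemmas~\ref{lemma:LipcPath} and~\ref{lemma:FmuBound} entirely, at the cost of a coarser constant $M\bigl(C + (1-\sigma)\sqrt{m}\bigr)$ — entirely sufficient for the $\mathcal{O}(\mu)$ statement of this lemma, though it would not by itself supply the refined constants needed later.
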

\begin{proof}
Analogous to \cite[Lemma~5]{EkFor2021}.
\end{proof}

\section{A structured modified Newton approach}\label{sec:lowRankUpdMat}
In order to describe the approach and its ideas, we first consider a simple setting with one iteration. For a given $\mu > 0$, consider the interior-point iterate $z^+ \in \mathcal{B}(z^*, \delta)$ defined by  $z^+ = z + \Delta \hat{z}$, where $z \in \mathcal{B}(z^*, \delta)$ and $\Delta \hat{z}$ satisfies (\ref{eq:PDnewtSyst}) with $\mu^+ = \sigma \mu$, $0<\sigma<1$. Since $\Delta \hat{z}$ has been computed with (\ref{eq:PDnewtSyst}) we assume that a factorization of $F'(z)$ is known. Instead of performing another Newton step $\Delta \hat{z}^+$ at $z^+$ for some $\mu^{++} = \sigma^+ \mu^+$, $0 < \sigma^+ \leq 1$, which requires the solution of (\ref{eq:PDnewtSyst}) with $\mu^{++}$ and $z^+$, we would like to compute an approximate solution, which is computationally less expensive, from
\begin{equation} \label{eq:modNewtDz}
B^+ \Delta z^+ = - F_{\mu^{++}}(z^+), \quad \mbox{ where } B^+ = F'(z) + U,
\end{equation}
and $U$ is some low-rank update matrix. A natural question is then how
to choose the update matrix $U$. Gondzio and Sobral~\cite{GonSob2019}
consider rank-1 update matrices such that the distance, in Frobenius norm, between $B^+$ and the previous Jacobian approximation is minimized, when $B^+$ in addition satisfies the secant condition. They show that the sparsity pattern of the first two block rows is maintained, however the sparsity pattern of the third row block row is typically lost.  In contrast, our strategy is, for a given rank restriction $r$ on $U$, to choose $U$ such
that the distance, in both 2-norm and Frobenius norm, between $B^+$ and the actual Jacobian $F'(z^+)$ is minimized. The sparsity of the Jacobian is maintained, however there is no requirement for $B^+$ to fulfill the secant condition.

To further support the choice of update matrix we give some additional theoretical results. First, we show that there is a region where the modified Newton approach produces small errors with respect to the Newton direction. In particular, a region that depends on $\mu$ where the modified Newton direction approaches the Newton direction as $\mu \to 0$. Later, we also discuss general errors, descent directions with respect to our merit function $\phi_\mu(z)$ and conditions for local convergence.

The error of using the modified Jacobian $B^+$ of (\ref{eq:modNewtDz}) is 
 \begin{equation} \label{eq:JacAppErr}
 E^+= F'(z^+) - B^+ = \Delta F'(\Delta \hat{z}) - U.
 \end{equation}
Given a rank restriction $r$, $0\leq r \leq m$, on $U$, the Eckart-Young-Mirsky theorem gives the update matrix $U$ that minimizes the Jacobian error $E^+$, in terms of the measure $\| . \|_2$ and $\| . \|_F$. In Proposition~\ref{prop:U} below, we give an expression for $U$ and show that the resulting modified Jacobian may be viewed as a Jacobian evaluated at a point $\bar{z}^+ = ( \bar{x}^+, \bar{\lambda}^+, \bar{s}^+)$.

\begin{proposition} \label{prop:U}
For $z = (x, \lambda, s)$ and $\Delta z = ( \Delta x, \Delta \lambda,
\Delta s)$, let $F'(z)$ and $\Delta F'( \Delta z)$ be defined by
(\ref{eq:Fp}) and (\ref{eq:dFp}) respectively, and let $z^+=z+\Delta z$. For a given rank $r$, $ 0 \le r \le m$, let $\mathcal{U}_r$ be the set of indices corresponding to the $r$ largest quantities of $\sqrt{(\Delta \lambda_i)^2 + (\Delta s_i)^2}$, $i=1,\dots,m$. The optimal solution of 
\begin{equation*}
\begin{array}{cl} 
\minimize{ U \in \mathbb{R}^{(2m+n)\times (2m+n)}} & {\| F'(z^+) - B^+
  \|} \\
 \subject & B^{+}= F'(z) + U, \\
          & \rank{ ( U )} \le r,
\end{array} 
\end{equation*}
where $\| . \|$ is either of type $\| .\|_2$ or $\| . \|_F$, is
\begin{equation*}
U_* = \sum_{i \in \mathcal{U}_r}  e_{n+m+i} \left(   ( s_i^+ - s_i ) e_{n+i} + (\lambda_i^+ - \lambda_i) e_{m+n+i} \right)^T.
\end{equation*}

In consequence, it holds that
\[B^+ = F'(\bar{z}^+), \mbox{ with }
 (\bar{x}^+_{i}, \bar{\lambda}^+_{i}, \bar{s}^+_{i}) 
= \begin{cases}
(x^{+}_{i}, \lambda^{+}_{i}, s^{+}_{i}) &  i \in \mathcal{U}_r, \\
(x^{+}_{i}, \lambda_i, s_i) & i \in \{1,\dots, m\} \setminus \mathcal{U}_r.
 \end{cases}
\]
\end{proposition}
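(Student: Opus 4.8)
The plan is to recognize the minimization as a classical best low-rank approximation problem and then read off the minimizer from the Eckart--Young--Mirsky theorem, after writing down a singular value decomposition of the matrix being approximated.

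First I would use the identity $F'(z^+) = F'(z) + \Delta F'(\Delta z)$, valid with $\Delta z = z^+ - z$ by (\ref{eq:dFp}), so that for every feasible $U$ the objective equals $\| \Delta F'(\Delta z) - U \|$. Since the only restriction on $U$ is that its rank be at most $r$, the problem is precisely that of finding a best rank-$r$ approximation of the fixed matrix $M \triangleq \Delta F'(\Delta z)$, simultaneously in $\|\cdot\|_2$ and $\|\cdot\|_F$; by Eckart--Young--Mirsky such an optimum is attained by a truncated SVD of $M$ retaining its $r$ largest singular values, and the same truncation is optimal for both norms, which is what lets the statement leave the choice of norm open.

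Next I would exhibit an SVD of $M$ explicitly. From (\ref{eq:dFp}), $M = \sum_{i=1}^m e_{n+m+i}\, w_i^T$ with $w_i = (\Delta s_i)\, e_{n+i} + (\Delta \lambda_i)\, e_{n+m+i}$, $\Delta s_i = s_i^+ - s_i$, $\Delta \lambda_i = \lambda_i^+ - \lambda_i$. Set $\sigma_i = \| w_i \| = \sqrt{(\Delta \lambda_i)^2 + (\Delta s_i)^2}$, and for the indices with $\sigma_i > 0$ put $u_i = e_{n+m+i}$ and $v_i = w_i / \sigma_i$. The key observation is that the vectors $u_i$ are mutually orthonormal (they are distinct standard basis vectors) and the vectors $v_i$ are likewise mutually orthonormal, because $w_i$ is supported on the index pair $\{ n+i,\, n+m+i \}$ and these pairs are pairwise disjoint for $i = 1, \dots, m$. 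Hence $M = \sum_{i : \sigma_i > 0} \sigma_i\, u_i v_i^T$ is an SVD of $M$, all remaining singular values being zero. This is the only step requiring care: the whole argument rests on the block structure of $\Delta F'(\Delta z)$ forcing both families of singular vectors to be orthogonal, so that the obvious decomposition is already in SVD form.

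Finally I would assemble the result. Letting $\mathcal{U}_r$ index the $r$ largest of the quantities $\sqrt{(\Delta \lambda_i)^2 + (\Delta s_i)^2}$, the corresponding truncated SVD is $U_* = \sum_{i \in \mathcal{U}_r} \sigma_i\, u_i v_i^T = \sum_{i \in \mathcal{U}_r} e_{n+m+i}\big( (s_i^+ - s_i) e_{n+i} + (\lambda_i^+ - \lambda_i) e_{n+m+i} \big)^T$, which is the asserted $U_*$. For the consequence I would add $U_*$ to $F'(z)$ entrywise: only the rows $n+m+i$ with $i \in \mathcal{U}_r$ change, and there the entry in column $n+i$ becomes $s_i + (s_i^+ - s_i) = s_i^+$ and the entry in column $n+m+i$ becomes $\lambda_i + (\lambda_i^+ - \lambda_i) = \lambda_i^+$, while the $H$, $A$, $-A^T$, $-I$ blocks are untouched; since $F'$ depends on $(\lambda, s)$ only through the diagonal blocks $S$ and $\Lambdait$ and does not depend on $x$ at all, this is exactly the claimed identity $B^+ = F'(\bar z^+)$ for the displayed $\bar z^+$ (the $x$-components being immaterial, hence the cosmetic choice $\bar x^+ = x^+$). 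I would also remark that when there are ties among the $\sigma_i$, neither $\mathcal{U}_r$ nor the minimizer is unique, so ``the optimal solution'' is to be read as ``an optimal solution''; beyond this bookkeeping no real obstacle arises.
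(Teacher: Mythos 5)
Your proposal is correct and follows essentially the same route as the paper: reduce the objective to $\|\Delta F'(\Delta z) - U\|$, exhibit the SVD of $\Delta F'(\Delta z)$ with left singular vectors $e_{n+m+i}$ and right singular vectors proportional to $\Delta s_i e_{n+i} + \Delta\lambda_i e_{n+m+i}$, apply Eckart--Young--Mirsky, and verify $B^+=F'(\bar z^+)$ by performing the update entrywise. The only (immaterial) difference is that you verify the SVD directly from the disjoint supports of the $w_i$, whereas the paper's Lemma~\ref{lemma:SVDdFp} does so via the eigenvectors of $(\Delta F')(\Delta F')^T$ and $(\Delta F')^T(\Delta F')$.
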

\begin{proof}
Note that $\| F'(z^+) - B^+ \| = \| \Delta F'(\Delta z) -  U \| = \| E^+\|$ by (\ref{eq:JacAppErr}). The result then follows from the Eckart-Young-Mirsky theorem, stated in Theorem~\ref{thm:EckartYoundMirsky},  together with Lemma~\ref{lemma:SVDdFp}. The last part of the proposition follows directly from performing the update.
\end{proof}

Proposition~\ref{prop:U} shows that each rank-1 term of the sum in $U_*$ added to $F'(z)$ is equivalent to the update of one component-pair, $(\lambda, s)$, in the $\Lambdait$ and $S$ blocks of the Jacobian. The essence is that adding the rank-$r$ update matrix $U_*$ to $F'(z)$ is equivalent to updating pairs $(\lambda_i, s_i)$ to $\left( \lambda^{+}_{i}, s^{+}_{i} \right)$, $i \in \mathcal{U}_r$, and that the modified Jacobian at $z^+$ may be viewed as a Jacobian evaluated at $\bar{z}^+$. In particular, $r=m$ gives $\bar{z}^+ = z^+$ and $B^+ = F'(z^+)$. 

Before we give the analogous result of Proposition~\ref{prop:U} in a more general framework, we show that there exists a region where the modified Newton approach may be started without causing large errors in the search direction. In particular, we give a bound on the search direction error $ \| \Delta \hat{z}^+ - \Delta z^+ \|$, where $\Delta z^+$ satisfies  (\ref{eq:modNewtDz}) with update matrix $U_*$ of rank $r$ as given in Proposition~\ref{prop:U}.  In the derivation, the inverse of $B^+$ is expressed as a Neumann series which requires $\rho ( F'(z^+) \inv E^+) < 1$. We first show that among $U$ such that $\rank(U) \le r$, $U_*$ is sound in regard to the reduction of an upper bound of $\rho (F'(z^+) \inv E^+)$. Thereafter, in Lemma~\ref{lemma:spectralradiusMot} we show that, for iterates $z$ sufficiently close to the barrier trajectory, the quantity $\| F'(z^+) \inv E^+ \|$, and consequently also $\rho (F'(z^+) \inv E^+)$, is bounded above by a constant times $\mu$. This gives the existence of a region, that depends on the barrier parameter, where  $\rho ( F'(z^+) \inv E^+ ) < 1$. 

By assumption $z^+ \in \mathcal{B}(z^*, \delta)$, hence by Lemma~\ref{lemma:background:FpnonsingCont} there exists a constant $M>0$ such that $\| F'(z^+) \inv \|  \le M$, and it holds that
 \begin{equation} \label{eq:spectralRadMot0}
\rho ( F'(z^+) \inv E^+ )  \leq \| F'(z^+) \inv E^+ \|  \leq M  \sigma_{max}(\Delta F'(\Delta \hat{z} ) - U).
 \end{equation}
Lemma~\ref{lemma:SVDdFp} shows that the singular values of $\Delta F'(\Delta \hat{z})$ are given by \linebreak $\sqrt{(\Delta \hat{\lambda}_i)^2 + (\Delta \hat{s}_i)^2}$,  $i=1,\dots,m$. The largest reduction of the upper bound in (\ref{eq:spectralRadMot0}), among $U$ such that $\rank(U) \le r$, is achieved with the rank-$r$ update matrix $U_*$ of Proposition~\ref{prop:U}, which gives 
 \begin{align} \label{eq:spectralRadMot1}
\rho ( F'(z^+) \inv E^+ ) & \leq\| F'(z^+) \inv E^+ \|  \leq M \max_{i=r+1,\dots, m} \sqrt{(\Delta \hat{\lambda}_i)^2 + (\Delta \hat{s}_i)^2} \nonumber \\ 
&= M  \sqrt{(\Delta \hat{\lambda}_{r+1})^2 +( \Delta \hat{s}_{r+1})^2},
 \end{align}
 where the indices $i=1,\dots,m$ are ordered such that $ \sqrt{(\Delta \hat{\lambda}_i)^2 + (\Delta \hat{s}_i)^2}$ are in descending order. Thus supporting the choice of update matrix in regard to the reduction of the upper bound of the spectral radius, and 2-norm, of $F'(z^+)\inv E^+$.

\begin{lemma} \label{lemma:spectralradiusMot}
Under Assumption~\ref{ass1}, let $\mathcal{B}\left( z^*,  \delta\right)$ and $\muhat$ be defined by Lemma~\ref{lemma:background:FpnonsingCont} and Lemma~\ref{lemma:LipcPath} respectively.  For $0<\mu \le \muhat$ and $z \in \mathcal{B}(z^*, \delta)$, define ${z^+= z +  \Delta \hat{z}}$, where $\Delta \hat{z}$ is the solution of (\ref{eq:PDnewtSyst}) with $\mu^+ = \sigma \mu$, $0< \sigma < 1$. Moreover, let ${E^+= \Delta F'(\Delta \hat{z}) -U_*}$ with $U_*$ defined as the rank-$r$, $0 \le r<m$, update matrix $U_*$ of Proposition~\ref{prop:U}. If $z$ is sufficiently close to $z^{\mu}  \in \mathcal{B}(z^*, \delta)$ such that $\| F_{\mu} (z) \| = \mathcal{O}(\mu)$ and $z^+  \in \mathcal{B}(z^*, \delta)$, then
\begin{equation} \label{eq:specRadAsymypBound}
\| F'(z^+) \inv E^+ \|   \le M  C^{(r+1)} \mu,
 \end{equation}
 where $M$ is defined by Lemma~\ref{lemma:background:FpnonsingCont} and $C^{(r+1)} > 0$ is a constant such that \linebreak ${
 \sqrt{(\Delta \hat{\lambda}_{r+1})^2 +( \Delta \hat{s}_{r+1})^2} \le C^{(r+1)} \mu }$ with $ \sqrt{(\Delta \hat{\lambda}_i)^2 + (\Delta \hat{s}_i)^2}$, $i=1,\dots,m$, ordered in descending order. In addition, $C^{(r+1)}$ decreases as $r$ increases.
\end{lemma}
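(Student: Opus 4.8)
The plan is to bound $\|\Delta\hat\lambda_{r+1}\|$ and $\|\Delta\hat s_{r+1}\|$ individually, since by definition these are components of the Newton direction $\Delta\hat z$ computed at $z$ for barrier parameter $\mu^+=\sigma\mu$. By Lemma~\ref{lemma:dzBound}, under the hypothesis $\|F_\mu(z)\|=\mathcal{O}(\mu)$, the full Newton direction satisfies $\|\Delta\hat z\| = \mathcal{O}(\mu)$. Since each individual component is bounded in absolute value by the norm of the whole vector, we get $|\Delta\hat\lambda_i| \le \|\Delta\hat z\| = \mathcal{O}(\mu)$ and likewise $|\Delta\hat s_i| = \mathcal{O}(\mu)$ for every $i$, hence
\[
\sqrt{(\Delta\hat\lambda_{r+1})^2 + (\Delta\hat s_{r+1})^2} \le \sqrt{2}\,\|\Delta\hat z\| = \mathcal{O}(\mu).
\]
This is exactly the statement that there exists a constant $C^{(r+1)}>0$ with $\sqrt{(\Delta\hat\lambda_{r+1})^2+(\Delta\hat s_{r+1})^2}\le C^{(r+1)}\mu$ for $\mu$ sufficiently small; note the subscript $r+1$ refers to the ordering in descending order, so $C^{(r+1)}$ in fact majorizes the $(r+1)$st largest such quantity, and any constant that works for the full-norm bound works here.

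Next I would combine this with the chain of inequalities already displayed in the text leading up to the lemma. Specifically, equation~(\ref{eq:spectralRadMot0}) gives $\|F'(z^+)\inv E^+\| \le M\,\sigma_{\max}(\Delta F'(\Delta\hat z) - U)$, and by Lemma~\ref{lemma:SVDdFp} together with the choice $U=U_*$ of Proposition~\ref{prop:U}, the singular values of $\Delta F'(\Delta\hat z)-U_*$ are precisely the $m-r$ smallest values of $\sqrt{(\Delta\hat\lambda_i)^2+(\Delta\hat s_i)^2}$, whose maximum is $\sqrt{(\Delta\hat\lambda_{r+1})^2+(\Delta\hat s_{r+1})^2}$ in the descending ordering, as in~(\ref{eq:spectralRadMot1}). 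Chaining,
\[
\|F'(z^+)\inv E^+\| \le M\sqrt{(\Delta\hat\lambda_{r+1})^2+(\Delta\hat s_{r+1})^2} \le M\,C^{(r+1)}\mu,
\]
which is~(\ref{eq:specRadAsymypBound}). One should be slightly careful that $z^+\in\mathcal{B}(z^*,\delta)$ is invoked so that Lemma~\ref{lemma:background:FpnonsingCont} applies at $z^+$ and supplies the constant $M$; this is assumed in the hypotheses, so no extra work is needed.

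I do not expect a genuine obstacle here — the lemma is essentially a repackaging of Lemma~\ref{lemma:dzBound} plus the Eckart--Young--Mirsky singular value identification already established. The only point requiring mild care is bookkeeping of the index ordering: the quantities $\sqrt{(\Delta\hat\lambda_i)^2+(\Delta\hat s_i)^2}$ must be sorted in descending order both when identifying which rank-$r$ update $U_*$ removes the $r$ largest ones and when naming the residual maximum as the "$(r+1)$st" term; conflating the sorted and unsorted indexings would be the easiest slip. Everything else is an application of the triangle inequality and the componentwise bound $|v_i|\le\|v\|$, so the proof is short.
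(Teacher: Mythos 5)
Your proposal is correct and follows essentially the same route as the paper: invoke Lemma~\ref{lemma:dzBound} to get $\|\Delta\hat z\|=\mathcal{O}(\mu)$, deduce the componentwise bound $\sqrt{(\Delta\hat\lambda_{r+1})^2+(\Delta\hat s_{r+1})^2}\le C^{(r+1)}\mu$, and combine it with the chain (\ref{eq:spectralRadMot0})--(\ref{eq:spectralRadMot1}) already established via Lemma~\ref{lemma:SVDdFp} and the choice $U=U_*$. Your remarks on the descending-order bookkeeping and on using $z^+\in\mathcal{B}(z^*,\delta)$ to supply $M$ match the paper's (more terse) argument.
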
 
 \begin{proof}
The point $z$ and direction $\Delta \hat{z}$ satisfy the conditions of Lemma~\ref{lemma:dzBound}, hence there exists a constant $C>0$ such that $\| \Delta \hat{z} \| \leq C \mu$. In consequence, there exist constants $C^{(i)}>0$, $i=1,\dots, m$, such that
 \begin{equation} \label{eq:proof:spectralRadMot}
 \sqrt{(\Delta \hat{\lambda}_i)^2 + (\Delta \hat{s}_i)^2} \le C^{(i)} \mu, \quad i=1,\dots,m.
 \end{equation}
If in addition, $\sqrt{(\Delta \hat{\lambda}_i)^2 + (\Delta \hat{s}_i)^2}$, $i=1,\dots,m$, are ordered in descending order, then $C^{(i)}$,  $i=1,\dots,m$, may be chosen such that  $C^{(1)} \geq ... \geq C^{(m)}$. A combination of (\ref{eq:spectralRadMot1}) and (\ref{eq:proof:spectralRadMot}) gives the result.
\end{proof}

The bound in (\ref{eq:specRadAsymypBound}) of Lemma~\ref{lemma:spectralradiusMot} shows that $\| F'(z^+) \inv E^+ \|$, and by (\ref{eq:spectralRadMot0}) also $\rho ( F'(z) \inv E )$, will be less than unity for sufficiently small $\mu$. Indeed, this is also true when $U$ is a zero matrix, i.e., for a simplified Newton strategy.
The derivation of the result of Lemma~\ref{lemma:spectralradiusMot} utilizes that, for a given rank restriction $r$, $U_*$ of Proposition~\ref{prop:U} is the update matrix that gives the tightest bound in (\ref{eq:spectralRadMot1}). In consequence, $U_*$ is also the rank-$r$ update matrix that gives the tightest upper bound in the result of the lemma, with our analysis. Moreover, $C^{(1)} \geq ... \geq C^{(m)}$, and consequently $C^{(r+1)}$ decreases with increasing $r$.  In addition, (\ref{eq:specRadAsymypBound}) provides an explicit sufficient condition on $\mu$, depending on $M$, or $\| F'(z^+)\inv \|$, and $C^{(r+1)}$, for $\rho ( F'(z^+) \inv E^+ )~<~1$.  

Next we give a bound on the search direction error at $z^+$ with the modified Newton equation (\ref{eq:modNewtDz}) relative to the Newton equation (\ref{eq:PDnewtSyst}) with $\mu^{++}$. It is shown that  the error is bounded by a constant times $\mu^3$ when $\mu^{++}  = \mu^+$ and a constant times $\mu^2$ when $\mu^{++} <  \mu^+$. As may be anticipated, the bound is tighter when  $\mu$ is not decreased in the corresponding iteration.

\begin{theorem} \label{thm:dzErr}
Under Assumption~\ref{ass1}, let $\mathcal{B}\left( z^*,  \delta\right)$, $M$, $C_3$, $\muhat$, be defined by \linebreak Lemma~\ref{lemma:background:FpnonsingCont} and Lemma~\ref{lemma:LipcPath}.  For $0<\mu \le \muhat$, assume that $z \in \mathcal{B}(z^*, \delta)$ is sufficiently close to $z^{\mu}  \in \mathcal{B}(z^*, \delta)$ such that $\| F_{\mu} (z) \| = \mathcal{O}(\mu)$. Define $z^+= z +  \Delta \hat{z}$ where $\Delta \hat{z}$ is the solution of (\ref{eq:PDnewtSyst}) with $\mu^+ = \sigma \mu$, $0< \sigma < 1$. Moreover, let $\Delta z^+$ be defined by  (\ref{eq:modNewtDz}) with $\mu^{++} = \sigma^+ \mu^+$, $0 <\sigma^+ \le 1$, and $U$ as the rank-$r$, $0 \le r<m$, update matrix $U_*$ of Proposition~\ref{prop:U}.  If $z^+ \in \mathcal{B}(z^*, \delta)$, then there exists $\mubar$, $0<\mubar \le \muhat$, such that for $0<\mu \le \mubar$ 
\begin{equation} \label{eq:thmeq}
\left\| \Delta \hat{z}^+ - \Delta z^+ \right \| \leq    \frac{M C^{(r+1)}}{1-M C^{(r+1)}\mu} \big( C_3(1 - \sigma^+) \sigma \mu^2 + \mathcal{O}(\mu^3) \big),
\end{equation}
where $\Delta \hat{z}^+$ is the Newton step at $
  z^+$, given by $F'(z^+)  \Delta 
  \hat{z}^+= -F_{\mu^{++}}(z^+)$, and $C^{(r+1)} > 0$ is such that $
 \sqrt{(\Delta \hat{\lambda}_{r+1})^2 +( \Delta \hat{s}_{r+1})^2} \le C^{(r+1)} \mu $ with $ \sqrt{(\Delta \hat{\lambda}_i)^2 + (\Delta \hat{s}_i)^2}$, $i=1,\dots,m$, ordered in descending order. In addition, $C^{(r+1)}$ decreases as $r$ increases.
\end{theorem}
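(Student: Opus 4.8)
The plan is to compare the two search directions via the resolvent identity and a Neumann series, and then to pin down $\|\Delta \hat{z}^+\|$ by exploiting the exact quadratic structure of the $\mu=0$ residual. Set $N := F'(z^+)\inv E^+$ with $E^+ = \Delta F'(\Delta\hat z) - U_*$ as in (\ref{eq:JacAppErr}). Since $z^+\in\mathcal{B}(z^*,\delta)$ and $\|F_\mu(z)\|=\mathcal{O}(\mu)$, Lemma~\ref{lemma:spectralradiusMot} together with (\ref{eq:spectralRadMot0}) gives $\|N\|\le MC^{(r+1)}\mu$, so choosing $\mubar\le\muhat$ small enough that $MC^{(r+1)}\mubar<1$ makes $I-N$ invertible with the convergent Neumann series $(I-N)\inv=\sum_{k\ge 0}N^k$. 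Writing $B^+=F'(z^+)-E^+=F'(z^+)(I-N)$ and combining $\Delta\hat z^+=-F'(z^+)\inv F_{\mu^{++}}(z^+)$ with $\Delta z^+=-(B^+)\inv F_{\mu^{++}}(z^+)=(I-N)\inv\Delta\hat z^+$ from (\ref{eq:modNewtDz}) yields $\Delta\hat z^+-\Delta z^+=-\big(\sum_{k\ge 1}N^k\big)\Delta\hat z^+$, hence
\[
\|\Delta\hat z^+-\Delta z^+\|\ \le\ \frac{\|N\|}{1-\|N\|}\,\|\Delta\hat z^+\|\ \le\ \frac{MC^{(r+1)}\mu}{1-MC^{(r+1)}\mu}\,\|\Delta\hat z^+\|,
\]
using that $t\mapsto t/(1-t)$ is increasing on $[0,1)$.

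It then remains to establish $\|\Delta\hat z^+\|\le C_3(1-\sigma^+)\sigma\mu+\mathcal{O}(\mu^2)$; one factor of $\mu$ from the display above turns this into exactly (\ref{eq:thmeq}). To this end I would introduce the $\mu=0$ residual $F_0$ via $F_\mu(z)=F_0(z)-\mu\,(0,0,e)$ (block notation: $(0,0,e)$ is zero on the $x$- and $\lambda$-blocks, $e$ on the $s$-block). Because the first two block rows of $F_0$ are affine in $z$ and the third is bilinear ($\lambda_i s_i$), the Taylor expansion of $F_0$ about $z$ terminates: $F_0(z^+)=F_0(z)+F'(z)\Delta\hat z+(0,0,\Delta\hat\lambda\cdot\Delta\hat s)$, where by Lemma~\ref{lemma:dzBound} the remainder $(0,0,\Delta\hat\lambda\cdot\Delta\hat s)$ is $\mathcal{O}(\mu^2)$. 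Substituting $F'(z)\Delta\hat z=-F_{\mu^+}(z)=-F_0(z)+\mu^+(0,0,e)$ from (\ref{eq:PDnewtSyst}) and subtracting $\mu^{++}(0,0,e)$ gives the exact identity
\[
F_{\mu^{++}}(z^+)=(\mu^+-\mu^{++})(0,0,e)+(0,0,\Delta\hat\lambda\cdot\Delta\hat s),\qquad \mu^+-\mu^{++}=(1-\sigma^+)\sigma\mu.
\]
Applying $-F'(z^+)\inv$, and using $\|F'(z^+)\inv\|\le M$ from Lemma~\ref{lemma:background:FpnonsingCont}, the constant $C_3$ of Lemma~\ref{lemma:LipcPath} to bound $\|F'(z^+)\inv(0,0,e)\|$ (note $\partial F_\mu/\partial\mu=-(0,0,e)$), together with $\|\Delta\hat\lambda\cdot\Delta\hat s\|=\mathcal{O}(\mu^2)$, gives the required bound on $\|\Delta\hat z^+\|$, and the theorem follows.

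The Neumann-series estimate and the triangle/submultiplicativity inequalities are routine; the step that needs care — and the crux of the argument — is the exact expansion of $F_{\mu^{++}}(z^+)$. One must notice that because $F_0$ is exactly quadratic the Taylor remainder is precisely the $\mathcal{O}(\mu^2)$ term $(0,0,\Delta\hat\lambda\cdot\Delta\hat s)$, and that the Newton equation at $z$ makes the zeroth- and first-order terms cancel, leaving only the $\Theta(\mu)$ barrier-parameter mismatch term $(\mu^+-\mu^{++})(0,0,e)$ and that quadratic term; this is what produces the $\mu^2$ leading term with coefficient $C_3(1-\sigma^+)\sigma$ and the $\mathcal{O}(\mu^3)$ correction. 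A minor bookkeeping point is to take $\mubar=\min\{\muhat,\,1/(2MC^{(r+1)})\}$, which simultaneously makes the earlier lemmas applicable and keeps $1-MC^{(r+1)}\mu>0$; this also requires $r<m$ so that the $(r+1)$-th largest pair $\sqrt{(\Delta\hat\lambda_i)^2+(\Delta\hat s_i)^2}$ is defined, which is part of the hypothesis.
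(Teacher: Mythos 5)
Your proposal is correct, and its first half coincides with the paper's argument: both write $B^+=F'(z^+)\bigl(I-F'(z^+)^{-1}E^+\bigr)$, invoke Lemma~\ref{lemma:spectralradiusMot} to get $\|F'(z^+)^{-1}E^+\|\le MC^{(r+1)}\mu<1$ for $\mu\le\mubar$, expand $(B^+)^{-1}$ in a Neumann series, and thereby reduce (\ref{eq:thmeq}) to the estimate $\|\Delta\hat{z}^+\|\le C_3(1-\sigma^+)\sigma\mu+\mathcal{O}(\mu^2)$. Where you genuinely diverge is in how that estimate is obtained. The paper compares $z^+$ with the trajectory points $z^{\mu^+}$ and $z^{\mu^{++}}$: it writes $\Delta\hat{z}^+$ via $F'(z^+)^{-1}\bigl(F_{\mu^{++}}(z^{\mu^{++}})-F_{\mu^{++}}(z^+)-F'(z^+)(z^{\mu^{++}}-z^+)\bigr)+(z^{\mu^{++}}-z^+)$, uses the Lipschitz constant $1$ of $F'$ together with Lemma~\ref{lemma:FmuBound} and Lemma~\ref{lemma:LipcPath} to show $\|z^+-z^{\mu^{++}}\|\le C_3(1-\sigma^+)\sigma\mu+\mathcal{O}(\mu^2)$, and then transfers this to $\|\Delta\hat{z}^+\|$. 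You instead exploit that $F_0$ is exactly quadratic, so the Newton equation at $z$ yields the exact identity $F_{\mu^{++}}(z^+)=(\mu^+-\mu^{++})(0,0,e)+(0,0,\Delta\hat{\lambda}\cdot\Delta\hat{s})$ (which is easily verified blockwise), and then bound $\|\Delta\hat{z}^+\|=\|F'(z^+)^{-1}F_{\mu^{++}}(z^+)\|$ directly, with Lemma~\ref{lemma:dzBound} giving $\|\Delta\hat{\lambda}\cdot\Delta\hat{s}\|=\mathcal{O}(\mu^2)$. This is a cleaner route: it avoids Lemma~\ref{lemma:FmuBound} and the auxiliary trajectory points altogether, and it exhibits the residual at $z^+$ exactly rather than through a chain of triangle inequalities. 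The one caveat is your step $\|F'(z^+)^{-1}(0,0,e)\|\le C_3$: as Lemma~\ref{lemma:LipcPath} literally defines $C_3$ as an \emph{infimum} of $\|F'(z)^{-1}\partial F_\mu/\partial\mu\|$ over the ball, that definition only gives a lower bound, so you must read $C_3$ as the corresponding supremum over $\mathcal{B}(z^*,\delta)$ (otherwise replace it by $M\sqrt{m}$, which changes the constant in (\ref{eq:thmeq})). The paper's own proof relies on the same reading, since it uses $C_3$ as the Lipschitz constant of $\mu\mapsto z^\mu$, so this does not set your argument apart from theirs, but it is worth flagging.
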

\begin{proof} See Appendix. \end{proof}

Similarly as for Lemma~\ref{lemma:spectralradiusMot}, the result of Theorem~\ref{thm:dzErr} is also valid for $U$ as a zero matrix. The essence is again that, among update matrices $U$ such that $\rank(U) \leq r$, the rank-$r$ update matrix $U_*$ of Proposition~\ref{prop:U} is the matrix that provides the tightest bound on (\ref{eq:thmeq}) with our analysis. As mentioned, $U_*$ is also the matrix that gives the upper bound in (\ref{eq:specRadAsymypBound}), and in addition, $C^{(r+1)}$ decreases with increasing $r$.  Consequently, the bound in (\ref{eq:thmeq}) decreases with increasing $r$, and larger values of $r$ may thus also increase the region where the result of Theorem~\ref{thm:dzErr} is valid, i.e., the region where the proposed modified Newton approach may be a viable alternative.

\subsection{At a general iteration $k$}\label{sec:genk}
In this section we give a result analogous to Proposition~\ref{prop:U}, at iteration $k$, $k \geq 1$, in a damped modified Newton setting. Consider the sequence $\{z^i\}_{i=0}^k$ generated by $z^{i+1} = z^{i} + \alpha^i \Delta z^i$, $i=0,\dots, k-1$, where $\alpha^i$ is the step size. Suppose that each $\Delta z^i$ satisfies
\begin{equation}  \label{eq:genk:ModNewtDz}
B^{i} \Delta z^{i} = - F_{\mu^i}(z^i), \quad \mbox{ with } B^{i} =  \begin{cases}F'(z^0) & i=0, \\
 B^{i-1} + U^i & i = 1,\dots, k-1,\end{cases}
\end{equation}
for some $\mu^i>0$ and update matrix $U^{i}$ of rank $r^i$ . If at $k=1$, for a given rank $r^k$, the update matrix is chosen as the optimal solution of the optimization problem in Proposition~\ref{prop:U}, then $B^1 = F'(\bar{z}^1)$, for some $\bar{z}^1$. Inductively, at an iteration $k$, $k\ge 1$, for a given $\bar{z}^{k-1}$ and rank $r^k$, $0\le r^k \le m$, we wish to choose $U^k$ as
the optimal solution of
\begin{equation}\label{eq:genk:bestLRprob}
\begin{array}{cl} 
\minimize{ U \in \mathbb{R}^{(2m+n)\times (2m+n)}} & {\| F'(z^k) - B^k
  \|} \\
 \subject & B^{k}= B^{k-1} + U, \quad B^{k-1} = F'(\bar z^{k-1}), \\
          & \rank{ ( U )} \le r^k,
\end{array} 
\end{equation}
where  $\| . \|$ is either of type $\| . \|_2$ or $\| . \|_F$. The optimal solution of (\ref{eq:genk:bestLRprob}), the update of $\bar{z}^k$ from $\bar z^{k-1}$, and the
resulting optimal $B^k$ are shown in
Proposition~\ref{prop:Uk}. This is analogous to the update
from $z^0$ to $\bar z^1$ given in Proposition~\ref{prop:U}. The
essence is that the rank-$r^k$ update 
matrix, defined by the solution of (\ref{eq:genk:bestLRprob}), is
equivalent to updating information corresponding to the $r^k$
largest quantities $\sqrt{ (\lambda^k_i - \bar{\lambda}_i^{k-1})^2 +
  (s^k_i - \bar{s}^{k-1}_i )^2}$. In essence, the $r^k$ largest
deviations from the Newton step are corrected, and
$r^k=m$ gives $B^k=F'(z^k)$.
 \begin{proposition} \label{prop:Uk}
At iteration $k$, $k \geq 1$, for given vectors $z^k$, $\bar z^{k-1}$ and rank $r^k$, $0 \le r^k \le m$, consider optimization problem
(\ref{eq:genk:bestLRprob}). The optimal
solution of (\ref{eq:genk:bestLRprob}) is
\begin{equation*}
U^k_* = \sum_{i \in \mathcal{U}_{r^k}}  e_{n+m+i} \left(   (s^k_i - \bar{s}^{k-1}_i ) e_{n+i} +  (\lambda^k_i - \bar{\lambda}_i^{k-1}) e_{m+n+i} \right)^T,
\end{equation*}
where $\mathcal{U}_{r^k}$ is the set of indices corresponding to the $r^k$ largest quantities of \\$\sqrt{ (\lambda^k_i - \bar{\lambda}_i^{k-1})^2 + (s^k_i - \bar{s}^{k-1}_i )^2}$, $i=1,\dots,m$. In consequence, it holds that 
\begin{equation*}
B^{k} = F'(\bar{z}^{k}), \mbox{ with }  \bar{z}^k =(\bar{x}^k,\bar{\lambda}^k,\bar{s}^k) = \begin{cases}
(x^{k}_{i}, \lambda^{k}_{i}, s^{k}_{i}) &  i \in \mathcal{U}_{r^k}, \\
(x^{k}_{i}, \bar{\lambda}^{k-1}_i, \bar{s}^{k-1}_i) & i \in \{1,\dots, m\} \setminus \mathcal{U}_{r^k}.
 \end{cases}
\end{equation*}
\end{proposition}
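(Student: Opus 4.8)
The plan is to reduce Proposition~\ref{prop:Uk} to the same two ingredients used for Proposition~\ref{prop:U} — Lemma~\ref{lemma:SVDdFp} and the Eckart--Young--Mirsky theorem (Theorem~\ref{thm:EckartYoundMirsky}) — the only change being that the base point of the update is $\bar z^{k-1}$ rather than $z^0$. By construction of the sequence $\{B^i\}$ in (\ref{eq:genk:ModNewtDz}), the vector $\bar z^{k-1}$ is the point produced by applying the proposition at iteration $k-1$ (with the convention $\bar z^0\triangleq z^0$, so that the base case $k=1$ is exactly Proposition~\ref{prop:U}); in particular $B^{k-1}=F'(\bar z^{k-1})$, which is precisely the hypothesis built into (\ref{eq:genk:bestLRprob}). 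So it suffices to solve (\ref{eq:genk:bestLRprob}) under that assumption.

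The key observation is that $F'$ is independent of both $x$ and $\mu$, and that $H$ and $A$ are fixed, so
\[
F'(z^k)-B^k \;=\; F'(z^k)-F'(\bar z^{k-1})-U
\;=\;\begin{pmatrix} 0 & 0 & 0\\ 0 & 0 & 0\\ 0 & S^k-\bar S^{k-1} & \Lambdait^k-\bar\Lambdait^{k-1}\end{pmatrix}-U,
\]
with $\bar S^{k-1}=\textrm{diag}(\bar s^{k-1})$ and $\bar\Lambdait^{k-1}=\textrm{diag}(\bar\lambda^{k-1})$; that is, $F'(z^k)-F'(\bar z^{k-1})=\Delta F'(\Delta z)$ in the notation of (\ref{eq:dFp}) with $\Delta\lambda=\lambda^k-\bar\lambda^{k-1}$ and $\Delta s=s^k-\bar s^{k-1}$ (the $x$-part of $\Delta z$ being immaterial). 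Hence (\ref{eq:genk:bestLRprob}) asks for the best rank-$r^k$ approximation of $\Delta F'(\Delta z)$ in the chosen norm. By Lemma~\ref{lemma:SVDdFp} the nonzero singular values of $\Delta F'(\Delta z)$ are $\sqrt{(\lambda^k_i-\bar\lambda^{k-1}_i)^2+(s^k_i-\bar s^{k-1}_i)^2}$, $i=1,\dots,m$, with left singular vectors supported on index $n+m+i$ and right singular vectors supported on $\{n+i,\,m+n+i\}$. Applying Theorem~\ref{thm:EckartYoundMirsky} in both $\|\cdot\|_2$ and $\|\cdot\|_F$, an optimal $U$ is obtained by keeping the $r^k$ largest of these singular values, and reassembling the retained rank-one terms gives exactly $U^k_*$.

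For the ``in consequence'' part I would perform the update entrywise: the term $e_{n+m+i}\big((s^k_i-\bar s^{k-1}_i)e_{n+i}+(\lambda^k_i-\bar\lambda^{k-1}_i)e_{m+n+i}\big)^T$ adds $s^k_i-\bar s^{k-1}_i$ to the $(n+m+i,\,n+i)$ entry of $F'(\bar z^{k-1})$ — an entry of its $S$-block — and $\lambda^k_i-\bar\lambda^{k-1}_i$ to the $(n+m+i,\,m+n+i)$ entry — an entry of its $\Lambdait$-block — and touches nothing else. Summed over $i\in\mathcal U_{r^k}$ this replaces $(\bar s^{k-1}_i,\bar\lambda^{k-1}_i)$ by $(s^k_i,\lambda^k_i)$ for those indices while leaving the remaining pairs unchanged, so $B^k=F'(\bar z^k)$ with $\bar z^k$ as stated; for $r^k=m$ one has $\mathcal U_{r^k}=\{1,\dots,m\}$ and thus $B^k=F'(z^k)$. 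This also shows the block sparsity of $F'$ is preserved, since every rank-one term only perturbs diagonals of the $S$- and $\Lambdait$-blocks.

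The only delicate point is well-posedness under ties: if several of the quantities $\sqrt{(\lambda^k_i-\bar\lambda^{k-1}_i)^2+(s^k_i-\bar s^{k-1}_i)^2}$ coincide at the threshold, the set $\mathcal U_{r^k}$ — hence $U^k_*$ and $\bar z^k$ — is not unique, and for $\|\cdot\|_2$ the Eckart--Young--Mirsky minimizer is in any case non-unique. The statement should therefore be read as: for every admissible choice of $\mathcal U_{r^k}$, the displayed $U^k_*$ is a minimizer and yields $B^k=F'(\bar z^k)$; the induction closes for any such choice because nothing above depends on a particular selection. This is the same minor caveat as in Proposition~\ref{prop:U}, so no new difficulty arises.
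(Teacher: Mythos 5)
Your proposal is correct and follows essentially the same route as the paper: the paper proves Proposition~\ref{prop:Uk} by invoking the argument of Proposition~\ref{prop:U} with $z=\bar z^{k-1}$, $z^+=z^k$ and $\Delta z = z^k-\bar z^{k-1}$, which is exactly your reduction via Lemma~\ref{lemma:SVDdFp} and Theorem~\ref{thm:EckartYoundMirsky}. Your remarks on tie-breaking and non-uniqueness are a reasonable clarification but introduce nothing beyond the paper's argument.
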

\begin{proof}
The proof is analogous to that of Proposition~\ref{prop:U} with $z=\bar{z}^{k-1}$, $z^+= z^k$ and $\Delta z =  z^k - \bar{z}^{k-1}$. 
\end{proof}

In the described approach, $U_*^k$ of Proposition~\ref{prop:Uk} gives $B^k = F'( \bar{z}^k)$ for some $\bar{z}^k$. A direct consequence is that iterates which become primal-dual feasible, i.e., satisfies the first two block equations of (\ref{eq:PDnewtSyst}), will remain so.  Moreover, at iteration $k$, the error of using the modified Jacobian is 
 \begin{equation} \label{eq:Ek}
  E^k  =  F'(z^k) - B^k =  F'(z^k) - (B^{k-1}+ U^k_*)  =  F'(z^k) - F'( \bar{z}^k),
 \end{equation}
and it holds that
  \begin{align*}
\| E^k \|_{2}   \leq \| E^k \|_F =   \sqrt{\sum_{i=1}^m \sum_{j=1}^n \vert E^k_{ij} \vert^2} & =  \sqrt{\sum_{i=1}^m \left( (s_i^k - \bar{s}_i^k)^2 + (\lambda_i^k - \bar{\lambda}_i^k)^2 \right)}  \\ & = \| z^k - \bar{z}^k \|_2,
 \end{align*}
 since $x^k = \bar{x}^k$. Hence it follows that 
 \begin{equation} \label{eq:adPropNorm}
 \| F'(z^k) - F'( \bar{z}^k) \|_{2}  \leq  \| F'(z^k) - F'( \bar{z}^k) \|_{F}  = \| z^k - \bar{z}^k \|_2.
 \end{equation}
In fact, for any $z$ and $\tilde{z}$ it holds that
  \begin{equation*}
 \| F'(z) - F'(\tilde{z}) \|_{2}  \leq  \| F'(z) - F'(\tilde{z}) \|_{F}  \leq \| z -\tilde{z} \|_2,
 \end{equation*}
 which implies that the Lipschitz constant of $F'$ may be chosen as one. Recall that, among the update matrices $U^k$ such that $\rank(U^k) \leq r^k$, $U_*^k$ of Proposition~\ref{prop:Uk} is the update matrix that minimizes $\| F'(z^k) - F'( \bar{z}^k) \|_{F}$. Consequently, by (\ref{eq:adPropNorm}), $U_*^k$ is also the update matrix that minimizes $\| z^k - \bar{z}^k \|_2$. \\
 \\
 Next we show that the update matrix $U_*^k$ of Proposition~\ref{prop:Uk} is sound with respect to reducing an upper bound of the relative error of the search direction $\Delta z^k$.  At iteration $k$, $k\geq1$, $\Delta z^k$ satisfies (\ref{eq:genk:ModNewtDz}) which equivalently can be written as
\begin{equation*}
(F'(z^k) - E^k) ( \Delta \hat{z}^k -  \epsilon^k) = - F_{\mu^k} (z^k), \quad  \epsilon ^k = \Delta \hat{z}^k-\Delta z^k,
\end{equation*}
where $\Delta \hat{z}^k$ satisfies $F'(z^k) \Delta \hat{z}^k = - F_{\mu^k} (z^k)$. Standard perturbation analysis gives
\begin{equation}\label{eq:relErr}
\frac{\| \Delta \hat{z}^k - \Delta z^k  \| }{ \| \Delta z^k \|} \leq \| F'(z^k) \inv \| \| E^k \|,
\end{equation}
Given the restrictions on the update, $U_*^k$ of Proposition~\ref{prop:Uk} is the update that minimizes $\| E^k \|$, and in consequence gives the largest reduction in the upper bound on the relative error (\ref{eq:relErr}).

\subsection{Convergence}
In this section we discuss convergence towards the barrier trajectory,
i.e., convergence of the inner loop of
Algorithm~\ref{alg:simpleIPM}. We first give a condition for descent
direction with respect to our merit function, as our setting is
compatible with linesearch strategies. Thereafter, results are given
in a setting where unit steps are assumed to be tractable. We give
conditions on $B^k$, and thus $r^k$, so that the modified Newton
approach converges locally. The theoretical setting is here widened
slightly from the previous sections, in that we wish to quantify the
effects of the modified Newton approach also for larger values of $\mu$.
For a given $\mu>0$, iterate $z$ and vector $\Delta z$, consider the merit function and the univariate function 
\begin{equation*}
\phi_\mu(z) = \| F_\mu(z) \|, \text{ and } \varphi_\mu(\alpha) =  \| F_\mu (z+\alpha \Delta z) \|, \text{ respectively.}
\end{equation*}
The directional derivative is then
\begin{equation*} 
\nabla \phi_\mu(z)^T \Delta z = \varphi_\mu'(\alpha) \vert_{\alpha=0} =  \frac{d }{d \alpha} \| F_\mu (z+\alpha \Delta z) \| \rvert_{\alpha=0} =  \frac{\Delta z^T F'(z)^T F_\mu (z)}{ \| F_\mu (z) \|}.
\end{equation*}
At iteration $k$, $k\geq 0$, $z^k$ and $\Delta z^k$ in the modified Newton approach satisfies  $F'(\bar{z}^k) \Delta z^k = -F_\mu(z^k)$, for some $\bar{z}^k$. If $F'(\bar{z}^k$) is nonsingular then the directional derivative is
\begin{align} \label{eq:dDirModNewt} 
\nabla \phi_\mu(z^k)^T \Delta z^k & = -  \frac{1}{ \| F_\mu (z^k) \|} F_\mu (z^k)^T F'(z^k) F'(\bar{z}^k) \inv F_\mu (z^k) \nonumber \\
 & =  - \| F_\mu (z^k) \| -   \frac{1}{ \| F_\mu (z^k) \|} F_\mu (z^k)^T E^k F'(\bar{z}^k) \inv F_\mu (z^k),
\end{align}
with $E^k$ as in (\ref{eq:Ek}). From (\ref{eq:dDirModNewt}) it follows that $\Delta z^k$ is a descent direction with respect to $\phi_\mu$ if \begin{equation} \label{eq:dDirCrit}
\| F_\mu (z^k) \|^2 > - F_\mu (z^k)^T E^k F'(\bar{z}^k) \inv F_\mu (z^k). 
\end{equation}
Under the restrictions of the update, the rank-$r^k$ matrix $U^k_*$ of Proposition~\ref{prop:Uk} is chosen such that $\| E^k \|$ is minimized. In addition, $\| E^k \| = 0$ for $r^k = m$. A descent direction can hence always be ensured for $r^k$ sufficiently large. Moreover, in our theoretical setting, Lemma~\ref{lemma:spectralradiusMot} gives the existence of a region, that depends on $\mu$, where the modified Newton approach may be initiated so that $\| E^k \|$ is sufficiently small for (\ref{eq:dDirCrit}) to hold, even when $r^k=0$. However, the essence is again that $U_*^k$ gives the largest reduction of $\| E^k \|$, among $U^k$ such that $\rank(U^k) \leq r^k$.

Next we give a result on local convergence and discuss the modified
Newton approach in the framework of inexact Newton methods. Note that
the local convergence result in the proposition shows balance between
quadratic rate of convergence, which would follow if $B^k=F'(z^k)$,
i.e., $C_6=0$, and $\norm{F'(z^k)\inv}\le C_7$ for all $k$, and linear
rate of convergence, which would follow when $B^k$ differs from
$F'(z^k)$.

\begin{proposition}
  For a given $\mu>0$, assume that $z^\mu$ exists. At an iteration
  $k_0$, consider the sequence of iterates generated by
  $z^{k+1} = z^k + \Delta z^k$, $k = k_0, k_0 +1, \dots,$ where each
  $\Delta z^k$ satisfies (\ref{eq:genk:ModNewtDz}), with $\mu^k = \mu$
  and update matrix of rank-$r^k$, ${0\leq r^k \leq m}$, given by
  $U^k_*$ of Proposition~\ref{prop:Uk}. Assume that at each iteration $k$,
  $(B^k) \inv$ exists and that $r^k$ is chosen such that for all $k$,
  $\| (B^k)\inv \|\| F'(z^k) - B^k \| \le C_6$ for some $C_6<1$. If in
  addition, there is a $C_7$ such that for all $k$,
  $\norm{(B^k)\inv}\le C_7$, then it holds that
\begin{equation}\label{eq:newProp:last}
\| z^{k+1}-z^{\mu} \| \leq \frac{C_7}{2}  \| z^k - z^\mu \|^2 + C_6 \| z^k - z^\mu \|,
\end{equation}
so that $z^k$ converges to $z^\mu$ if
\begin{equation}\label{eq:zconv}
\norm{z^{k_0}-z^\mu} \le \frac{1-C_6}{C_7}.
\end{equation}
\end{proposition}

\begin{proof}
Under the conditions of the proposition, $\Delta z^k$ and $z^k$ satisfy \linebreak ${B^k \Delta z^k = - F_\mu (z^k)}$. At iteration $k+1$ the error may be written as 
\begin{align*}
 z^{k+1}-z^{\mu} & = z^k -  (B^k) \inv F_\mu (z^k) - z^{\mu} \\
 & =     (B^k) \inv \big( F_\mu(z^\mu) -  F_\mu (z^k)-F'(z^k)(z^\mu - z^k) \big) \\
 & \quad - (B^k) \inv \big( B^k -  F'(z^k)  \big) ( z^{\mu} - z^k),
\end{align*}
where $(B^k) \inv F'(z^k) ( z^{\mu} - z^k)$ has been added and subtracted in the second equality. Taking 2-norm while considering Lipschitz continuity of $F'$,  see end of Section~\ref{sec:genk}, and norm inequalities give
\begin{equation}\label{eq:newProof:last}
\| z^{k+1}-z^{\mu} \| \leq \frac{\| (B^k) \inv\| }{2}  \| z^k - z^\mu \|^2 + \| (B^k) \inv \| \| B^k - F'(z^k) \| \| z^k - z^\mu \|.
\end{equation}
Insertion of the assumed $C_6$ and $C_7$ into (\ref{eq:newProof:last})
gives (\ref{eq:newProp:last}). Finally, if \linebreak $\norm{z^{k}-z^\mu} \le
(1-C_6)/C_7$, then (\ref{eq:newProp:last}) gives
\begin{eqnarray*}
\| z^{k+1}-z^{\mu} \| & \leq & \frac{C_7}{2}  \| z^k - z^\mu \|^2 + C_6 \| z^k - z^\mu \|
                               \leq \left(\frac{1-C_6}{2} +C_6\right) \| z^k - z^\mu \| \\
  & \le & \frac{1+C_6}{2} \| z^k - z^\mu \|,
\end{eqnarray*}
so that $z^{k_0}$ satisfying (\ref{eq:zconv}) converges to $z^\mu$, as
$C_6<1$.
\end{proof}

Conditions for local convergence may also be obtained when the modified Newton approach is interpreted in the context of inexact Newton methods \cite{DemEisSte1982}. For a given $\mu >0$, such steps may be viewed on the form
 \begin{equation} \label{eq:inexactNewt}
 F'(z^k) \Delta z^k = - F_{\mu}(z^k) + q^k, \mbox{ where } \| q^k \| / \|  F_{\mu}(z^k)  \| \leq \eta^k.  
 \end{equation} 
The sequence of iterates $z^k + \Delta z^k$ converges to $z^\mu$, with at least linear rate, for $z^0$ sufficiently close to $z^\mu$ if $\eta^k < 1$ uniformly. Given that the iterates converge, the convergence is superlinear if and only if $ \| q^k \| = o(\| F_{\mu}(z^k) \|), \mbox{ as } k \to \infty$.
 
 The modified Newton approach can be put onto the form of
 (\ref{eq:inexactNewt}) under the assumption that $I-E^k
 F'(z^{k})\inv$ is nonsingular, where $E^k$ is given by (\ref{eq:Ek}). Nonsingularity may be ensured with an update matrix $U^k_*$ of Proposition~\ref{prop:Uk} of sufficiently large rank $r^k$, $0\leq r^k \leq m$, or by starting the modified Newton approach when $\mu$ is sufficiently small, as shown by Lemma~\ref{lemma:spectralradiusMot}. A straightforward calculation shows that
 (\ref{eq:genk:ModNewtDz}) at iteration $k$, with $\mu^k = \mu$, can be written as 
\begin{equation} \label{eq:conv1}
  F'(z^k) \Delta z^k  =    - F_{\mu}(z^k) + \big( I - (I-E^k F'(z^k) \inv)\inv \big) F_{\mu}(z^k).
\end{equation}
Identification of terms in (\ref{eq:inexactNewt}) and (\ref{eq:conv1}) gives \[ q^k = \big( I - (I-E^k F'(z^k) \inv)\inv \big) F_{\mu}(z^k).\]
If in addition $\| E^k F' (z^k)\inv  \|  < 1$, then
$q^k = \sum_{j=1}^\infty (E^k F'(z^k)\inv)^j F_{\mu}(z^k)$. Norm inequalities and standard geometric series results give
 \begin{equation*} 
\| q^k \|  \le \frac{ \| E^k F'(z^k) \inv \| }{1 - \| E^k F'(z^k) \inv \| } \|  F_\mu (z^k) \|.
\end{equation*}
Local convergence towards the barrier trajectory follows if, at each iteration $k$, $r^k$ of Proposition~\ref{prop:Uk} is chosen such that $  \| E^k F'(z^k) \inv \| / ( 1 - \| E^k F'(z^k) \inv \| ) < 1$ uniformly. Moreover, the convergence is superlinear if in addition $r^k$  is chosen such that $ \| E^k F'(z^k) \inv \| / ( 1 - \| E^k F'(z^k) \inv \| ) \to 0 $ as $k\to \infty$.
 
Similarly, the modified Newton approach may also be viewed in the framework of inexact interior-point methods in order to study conditions for global convergence, see e.g.,~\cite{Bel1998,ArmBenDus2012,Gon2013}. However, our analysis have only given further technical conditions which are outside the scope of this initial work. Instead, we have chosen to limit our focus to basic supporting results and to the study of practical performance with update matrices of low-rank at larger values of $\mu$.

\subsection{Reduced systems} \label{sec:redRegSolv}
The ideas presented so far have been on the unreduced unsymmetric block 3-by-3 system (\ref{eq:PDnewtSyst}).  In this section we describe the corresponding reduced and condensed system which are similar to those of (\ref{eq:PDnewtSystSym}) and (\ref{eq:PDnewtSystCond}). 

In essence, the system of linear equations to be solved for each iterate $z$ in the modified Newton approach takes the form
 \begin{equation} \label{eq:mNdz}
F'(\bar{z}) \Delta z = - F_\mu (z),
\end{equation}
for some $\bar{z}$. System (\ref{eq:mNdz}) may be reformulated on the reduced form
\begin{equation} \label{eq:PDmodNewtSystSym}
\begin{pmatrix}
H & A^T \\ A & -\bar{\Lambdait} \inv \bar{S} \end{pmatrix} \begin{pmatrix}  \Delta x \\ -\Delta \lambda \end{pmatrix}  = - \begin{pmatrix} Hx +c - A^T \lambda \\ Ax -b -s + \bar{\Lambdait} \inv ( \Lambdait S e - \mu e ) \end{pmatrix},
\end{equation} 
together with $\Delta s = - \bar{\Lambdait} \inv ( \Lambdait S e - \mu e ) - \bar{\Lambdait} \inv \bar{S} \Delta \lambda$. Schur complement reduction of $\bar{\Lambdait} \inv \bar{S}$ in (\ref{eq:PDmodNewtSystSym}) gives the condensed form
\begin{align}  \label{eq:PDmodNewtSystCond}
(H + A^T \bar{S} \inv \bar{\Lambdait} A ) \Delta x  = & - (Hx + c - A^T \lambda ) \nonumber \\ 
&  - A^T \bar{S} \inv ( \bar{\Lambdait} (Ax -b-s) + \Lambdait Se - \mu e),
\end{align}
together with $\Delta \lambda = - \bar{S} \inv ( \bar{\Lambdait} (Ax-b-s) + \Lambdait S e - \mu e ) - \bar{S} \inv \bar{\Lambdait} A \Delta x$.
As mentioned, the proposed rank-$r$ update matrix of Proposition~\ref{prop:Uk} is equivalent to updating $r$ component
pairs $(\bar{\lambda}, \bar{s})$. The change between iterations in the matrices of (\ref{eq:PDmodNewtSystSym}) and (\ref{eq:PDmodNewtSystCond}) is thus of rank $r$. In consequence, low-rank updates on the factorization of the matrix of (\ref{eq:PDmodNewtSystSym}), or (\ref{eq:PDmodNewtSystCond}), may also be considered. 

\subsection{Compatibility with previous work on interior-point methods}
In order to have simple notation, we have chosen to formulate our problem on the form (\ref{eq:IQP}), with inequality constraints only. Analogous results hold for quadratic programs on standard form, as considered in
\cite{GonSob2019,AltGon1999,MorSimTan2016,FriOrb2012,MorSim2017}. However, when working with reduced systems, then the update will be on the diagonal of the $H$-matrix in the symmetric
block 2-by-2 indefinite system.

Moreover, the proposed approach is also compatible with regularized methods for quadratic programming, e.g., \cite{FriOrb2012,AltGon1999,SauTom1996}, as long as the scaling of the regularization is not changed at iterations where the modified Jacobian is updated by a low-rank matrix. The scaling of the regularization may be changed at a refactorization step, e.g., on the form suggested in (\ref{eq:refact}) of Section~\ref{sec:impl}.  

As each modified Jacobian may be viewed as a Jacobian evaluated at a different point, the modified Newton approach may also be interpreted in the framework of previous work on stability, effects of
finite-precision arithmetic and spectral properties of the arising systems,
e.g., \cite{Wri2001,Wri1999,Wri1997,Wri1995s,ForGilShi1996,MorSimTan2017,GreMouOrb2014,ApuSimSer2010,MorSimTan2016,FGG07,MorSim2017}.

\section{Implementation} \label{sec:impl}
All numerical simulations were performed in \texttt{matlab} on
benchmark problems from the repository of convex quadratic programming
problems by Maros and M\'{e}sz\'{a}ros \cite{MarMes1999}. Many of
these problems contain both linear equality and linear inequality
constraints. However, in order not to complicate the description of the implementation with further technical details, we choose to give the description for problems on the same form as in previous sections. Note however that some of the parameters will depend on quantities related to the format of benchmark problems. 

\subsection{Basic algorithm} \label{sec:basicMethods}
The aim is to study the fundamental behavior of the modified Newton approach as
primal-dual interior-point methods converge. In particular, when each search direction is generated with a modified Newton equation on the form (\ref{eq:genk:ModNewtDz}), with update matrix $U^k_*$ of Proposition~\ref{prop:Uk}, relative to a Newton equation on the form (\ref{eq:PDnewtSyst}). In order not to risk combining effects of the proposed approach with effects from other features in more advanced methods, we chose to implement the
modified Newton approach in a simple interior-point
framework. Moreover, all systems of linear equations were solved with \texttt{matlab}'s built in direct solver. No low-rank update of factorizations was used or implemented for the numerical tests.  In consequence, the results are not dependent on the particular factorization or the procedure used to update the factors. For low-rank updates of matrix factorizations, see e.g.,~\cite{GilMurSau1987,Den2010,StaGriBol2007,GilGolMurSau1974}.
Our basic interior-point algorithm is similar to Algorithm~19.1 in Nocedal and Wright \cite[Ch.~19, p.~567]{NocWri06}, however, termination and the update of $\mu$ are based on the merit function $\phi_\mu (z) = \| F_\mu (z) \|$.

\begin{algorithm}[H]
\caption{Basic interior-point method  (\ref{eq:IQP}).}
\begin{algorithmic}[1]
\State $k \gets 0, \> \>$    $\mu^k \gets$ initial $\mu$,  \quad
\State $(x^k, \lambda^k, s^k) \gets$ Point such that $\lambda^k >0, s^k >0$ and $\| F_{\mu^k/\sigma}(x^k, \lambda^k, s^k) \|  < \mu^k/\sigma$.
\State \textbf{while} $\| F_0 (z^k) \| > \varepsilon^{tol}$ \textbf{do}
\State\hspace{\algorithmicindent} \textbf{while} $\| F_{\mu^k} (z^k) \| > \mu^k$ \textbf{do}
\State\hspace{\algorithmicindent}\hspace{\algorithmicindent} $(\Delta x^k, \Delta \lambda^k, \Delta s^k)   \quad \>  \> \> \>  \gets$ search direction
\State\hspace{\algorithmicindent}\hspace{\algorithmicindent} $(\alpha_P^k, \alpha_D^k)  \qquad \quad  \qquad \> \gets \big( \min \{ 1, 0.98\alpha_P^{max,k}\}, \min \{ 1,0.98\alpha_D^{max,k}\} \big)$ 
\State\hspace{\algorithmicindent}\hspace{\algorithmicindent} $(x^{k+1}, \lambda^{k+1}, s^{k+1})  \quad \> \>  \gets (x^k + \alpha_P^k \Delta x^k, \lambda^k + \alpha_D^k \Delta \lambda^k, s^k +\alpha_P^k \Delta s^k )$ 
\State\hspace{\algorithmicindent}\hspace{\algorithmicindent}   $\mu^{k+1} \gets \mu^k$, \quad $k \gets k+1$
\State\hspace{\algorithmicindent} \textbf{end while}
\State\hspace{\algorithmicindent}  $\mu^{k+1} \gets \sigma \mu^k$
\State  \textbf{end while}

\end{algorithmic}
\label{alg:simpleIPM}
\end{algorithm} 
\noindent In Algorithm~\ref{alg:simpleIPM} at iteration $k$, $\alpha_P^{max,k}$ and $\alpha_D^{max,k}$ are the maximum feasible step sizes for $s^k$ along $\Delta s^k$ and $\lambda^k$ along $\Delta \lambda^k$ respectively.

Our reference method, which in all experiments is denoted by \texttt{Newton}, is defined by Algorithm~\ref{alg:simpleIPM}
where the search direction at iteration $k$, $\Delta z^k = (\Delta x^k, \Delta \lambda^k, \Delta s^k)$, satisfies
(\ref{eq:PDnewtSyst}). The method, whose behavior we aim to study, is defined by Algorithm~\ref{alg:simpleIPM} where the search direction satisfies (\ref{eq:genk:ModNewtDz}), with an update matrix of rank $r^k= r$ given by
$U^k_*$ of Proposition~\ref{prop:Uk}. This method is in the numerical experiments denoted by  \texttt{mN-r($r$)}. 
Although the rank of the update matrices can be varied between the iterations, this initial
study is limited to update matrices of constant rank in order to keep the comparisons clean.

\subsection{Benchmark problems} \label{sec:benchmarkProblems}
Each problem was pre-processed and put on an equivalent form with $n$ $x$-variables, $m_{in}$ inequality constraints and $m_{eq}$ equality constraints. 
The total number of variables in the primal-dual formulation is thus $N = n + m_{eq}+ 2 m_{in}$ variables, see Appendix for a description and formulation of the systems that arise.
A trivial equality constraint that fixed a variable at any of its bounds was removed from the problem along with the variable. A problem was accepted if $m_{in} \geq 4$ and, in addition, if \texttt{Newton} converged from a given initial solution. Due to the simplicity of \texttt{Newton} convergence was not achieved for some problems due to reasons as, non-trivial equality constraints fixing variables at its bounds, singular Jacobians caused by linearly dependent equality constraints, etc.
Moreover, we were not able to run \texttt{CONT-300}, \texttt{BOYD1} and \texttt{BOYD2} due to memory restrictions.
These conditions reduced the benchmark set, $\mathcal{P}$, to 90 problems (out of 138).
The problems were divided into the three subsets: small, $\mathcal{S}$, medium, $\mathcal{M}$, and large, $\mathcal{L}$. The sets were defined as follows: $\mathcal{S} = \{ p \in \mathcal{P}: N < 500 \}$, $\mathcal{M} = \{ p \in \mathcal{P}:  500 \leq N < 10000 \}$ and $\mathcal{L} = \{ p \in \mathcal{P}: N \geq 10000 \}$. Consequently, $\vert\mathcal{S}\vert = 25$, $\vert\mathcal{M}\vert = 37$ and $\vert\mathcal{L}\vert = 28$. The specific problems of each group and details on their individual sizes can be found in Appendix.

\subsection{Heuristics} \label{sec:heuristics}
The theoretical results in Section~\ref{sec:lowRankUpdMat} concern iterates sufficiently close to the trajectory for sufficiently small $\mu$, or when the rank of each update matrix is sufficiently large. However, we are also interested in studying the behavior of the modified Newton approach beyond this setting. In particular, for larger vales of $\mu$ and when each update matrix is of low-rank. In this section, we discuss the behavior in these cases. In order to improve performance we also suggest two heuristics and a refactorization strategy. In essence, the heuristics allow for change of indices in the set $\mathcal{U}_{r^k}$ of Proposition~\ref{prop:Uk}, and the refactorization strategy limits the total rank change on an initial Jacobian. 

Numerical experiments with \texttt{mN-r($r$)}, for small $r$, have shown that convergence may slow down due to small step sizes $\alpha^k_P$ and $\alpha^k_D$. Small step sizes can be caused by few components in the modified Newton direction which differ considerably from those in the Newton direction. We first show some numerical evidence of this behavior and suggest a partial explanation on which we base two heuristics. The effectiveness of each heuristic is then illustrated, and finally a refactorization strategy is included in the modified Newton approach. Step sizes and convergence, in terms of the measure $\| F_\mu \|$ with $\mu = 0$, for \texttt{Newton} and  \texttt{mN-r($r$)} with  $r =[0,\> 2, \> 4]$ are shown in the left-hand side of Fig.~\ref{fig:stepSizes}. The results are for benchmark problem \texttt{qafiro} with parameters $\mu^0 = 1$, $\sigma = 0.1$ and $\varepsilon^{tol} = 10^{-6}$. The right-hand side of the figure shows the inverse of the limiting step sizes and the relative error in the search direction at the iteration marked by the red circle of \texttt{mN-r(2)}, hence large spikes imply small step sizes. Moreover, the figure only contains negative components of the modified Newton direction. The result for $r= 0$, i.e., simplified Newton, is given to illustrate that low-rank updates can indeed make a difference compared to a simplified Newton approach for which some of our theoretical results are still valid, although in a smaller region.
\begin{figure}[H]
\centering
\parbox{6.2cm}{
\includegraphics[width=6.2cm]{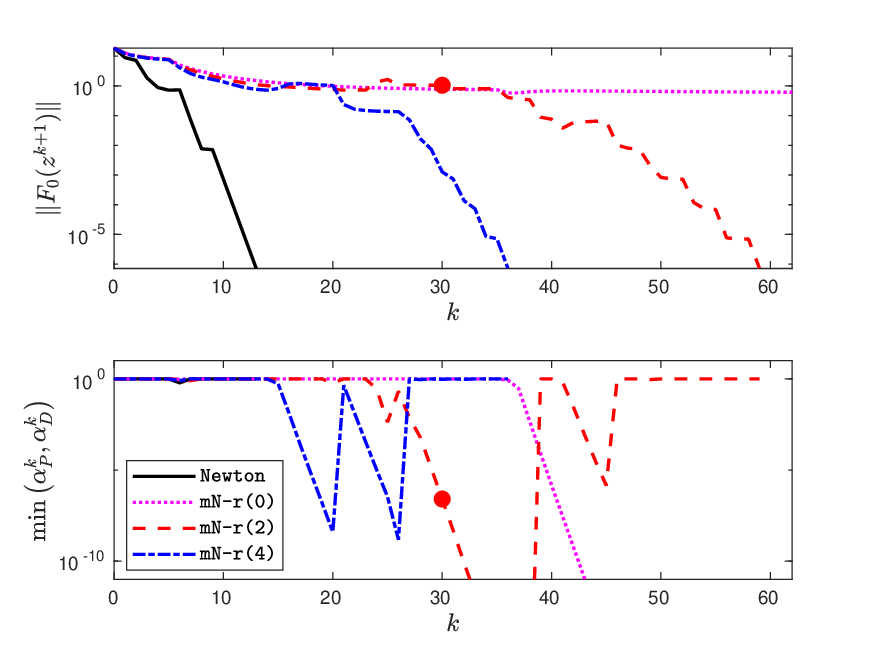}}
\hspace{-8mm}
\begin{minipage}{6.2cm}
\includegraphics[width=6.2cm]{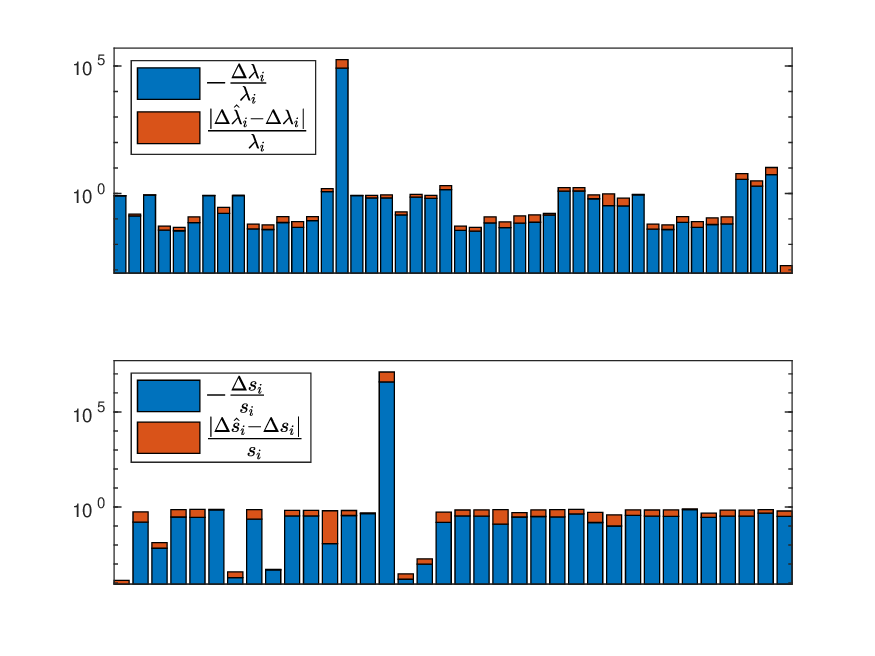} 
\end{minipage}
\caption{The left-hand side shows step sizes and convergence on benchmark problem \texttt{qafiro}. The right-hand side shows the inverse of the limiting step sizes and the relative error in the search direction for negative components of the modified Newton direction in \texttt{mN-r(2)}, at the iteration marked by the red circle. }
\label{fig:stepSizes}
\end{figure}
The results in the left-hand side of Fig.~\ref{fig:stepSizes} indicate that convergence may slow down with the low-rank modified Newton approach due to small step sizes. The right-hand side of Fig.~\ref{fig:stepSizes} suggests that small steps may be caused by large relative errors in certain components of the search direction. The results are similar to those shown by Gondzio and Sobral~\cite{GonSob2019} for quasi-Newton approaches, hence indicating that the proposed modified Newton approach suffers from the same phenomenon as quasi-Newton approaches. In theory, zero steps are not harmful for the modified Newton approach, as long as the Newton step makes progress from this point, since after $m/r$ iterations with zero steps the modified Jacobian will indeed be the Jacobian at that point. In practice however, close to zero steps have negative effects on the convergence. In consequence we would like to understand what causes these steps and how to avoid them. 

The partial solution $\Delta x$ of (\ref{eq:mNdz}) satisfies (\ref{eq:PDmodNewtSystCond}). For $z$ sufficiently close to $z^*$,  (\ref{eq:PDmodNewtSystCond}) may be approximated by
\begin{align} \label{eq:heurMotEq1}  
(H + A_\mathcal{A}^T \bar{S}_\mathcal{A} \inv \bar{\Lambdait}_\mathcal{A} A_\mathcal{A} +A_\mathcal{I}^T  \bar{S}_\mathcal{I} \inv \bar{\Lambdait}_\mathcal{I}  A_\mathcal{I} )  \Delta x & = - A_\mathcal{A}^T \bar{S}_\mathcal{A} \inv ( \Lambdait_\mathcal{A} S_\mathcal{A} e - \mu e) \nonumber \\
& \quad - A_\mathcal{I}^T \bar{S}_\mathcal{I} \inv ( \Lambdait_\mathcal{I} S_\mathcal{I} e - \mu e),
\end{align}
where $\mathcal{A}$ and $\mathcal{I}$ are sets of indices corresponding to active and inactive constraints at the solution $z^*$ respectively, i.e., $\mathcal{A} = \{ i : s_i^* = 0, \  i=1, \dots, m. \}$ and $\mathcal{I} = \{ i : \lambda_i^* = 0, \  i=1, \dots, m. \}$.
If the modified Newton approach is initiated for small $\mu$, or if the rank of each update matrix is sufficiently large, then $\bar{z} \approx z$. If in addition, $A_\mathcal{A} \Delta x$ is sufficiently large, i.e, $\Delta x$ is not in or almost in the null-space of $A_\mathcal{A}$ (if it is then the the search direction will not cause limiting steps), then the dominating terms of (\ref{eq:heurMotEq1}) are
\begin{equation} \label{eq:heurMotEq2}  
A_\mathcal{A}^T \bar{S}_\mathcal{A} \inv \bar{\Lambdait}_\mathcal{A} A_\mathcal{A}  \Delta x = - A_\mathcal{A}^T \bar{S}_\mathcal{A} \inv ( \Lambdait_\mathcal{A} S_\mathcal{A} e - \mu e).
\end{equation}
In essence, (\ref{eq:heurMotEq2}) is an approximation of (\ref{eq:heurMotEq1}) in the particular case. 
By our assumptions $A_\mathcal{A}$ has full row rank. Consequently, component-wise (\ref{eq:heurMotEq2}) gives
\begin{equation} \label{eq:cmpStepSizeAnalysis1}
\frac{\bar{\lambda}_i}{\bar{s}_i} A_i \Delta x =  \frac{ \mu - \lambda_i s_i}{\bar{s}_i}, \qquad i \in  \mathcal{A},
\end{equation}
where $ A_i$ denotes the $i$th row of $A$.
  Equation (\ref{eq:cmpStepSizeAnalysis1}) gives an approximate description of how each pair $(\bar{\lambda}_i,\bar{s}_i)$, $i\in\mathcal{A}$ affects $A_i \Delta x$, the inner product of the search direction $\Delta x$ and the corresponding
  constraint $A_i$. This means that each pair $(\bar{\lambda}_i,\bar{s}_i)$, $i\in\mathcal{A}$, affects the angle between $\Delta x$ and constraint $A_i$, and/or $ \| \Delta x \|$.  Both errors in angle and large $\| \Delta x \|$ may cause small step sizes.   In the proposed modified
  Newton approach, depending on the rank of the update matrix, some of
  the factors in the modified Jacobian, i.e., some components
  $\bar{\lambda}_i / \bar{s}_i$, $i\in \mathcal{A}$, may contain
  information from previous iterates. The analysis above suggests that
  it may be important to update certain components-pairs $(\lambda,s)$
  in order to avoid limiting steps. Such pairs may not be updated with
  the matrix of Proposition~\ref{prop:U} or Proposition~\ref{prop:Uk}.

    In light of the discussion above and the results of
    Fig.~\ref{fig:stepSizes}, we construct two heuristics in an
    attempt to decrease negative effects on convergence caused by
    small step sizes. Both heuristics have an update matrix $U^k$ analogous
    to the one given by Proposition~\ref{prop:Uk}, with
\begin{equation}\label{eq:Uheuristic}
U^k = \sum_{i \in \mathcal{U}^k}  e_{n+m+i} \left(   (s^k_i - \bar{s}^{k-1}_i ) e_{n+i} +  (\lambda^k_i - \bar{\lambda}_i^{k-1}) e_{m+n+i} \right)^T,
\end{equation}
where $\mathcal{U}^k$ is an index set of cardinality $r$. However,
not all indices in $\mathcal{U}^k$ are chosen according to the
criteria of Proposition~\ref{prop:Uk}, so that $\mathcal{U}^k$ of the
heuristics may differ from $\mathcal{U}_r$ of Proposition~\ref{prop:Uk}. (Note that $r^k = r$, for all $k$, in the numerical tests.)
The first 
  heuristic can have at most two indices that differ between
  $\mathcal{U}^k$ and $\mathcal{U}_r$, whereas 
  the second is more flexible and can potentially change all $r$ indices.
      We choose to replace indices instead of adding indices in order to
  obtain a fair comparison in the study of the heuristics.

\subsubsection*{Heuristic \texttt{H1}}
The idea of the first heuristic is to ensure that information
corresponding to component-pairs $(\lambda,s)$ is updated if either
limited the step size in the previous iteration. At iteration $k$, $k
\ge 1$, $\mathcal{U}^k$ is based on $\mathcal{U}_{r}$ of
  Proposition~\ref{prop:Uk}, but the last one or two indices are replaced by
\begin{equation*}
\hat{i}_1 = \argmin_{i: \Delta \lambda^{k-1}_{i} < 0} \frac{\lambda^{k-1}_{i}}{-\Delta \lambda^{k-1}_{i}}, \quad  \mbox{ and } \quad
\hat{i}_2 = \argmin_{i: \Delta s^{k-1}_{i} < 0} \frac{s^{k-1}_{i}}{-\Delta s^{k-1}_{i}},
\end{equation*}
if $\min_{i: \Delta \lambda^{k-1}_{i} < 0} \frac{\lambda^{k-1}_{i}}{-\Delta \lambda^{k-1}_{i}} <1 \land \hat{i}_1 \notin \mathcal{U}_{r}$ and/or $\min_{i: \Delta s^{k-1}_{i} < 0} \frac{s^{k-1}_{i}}{-\Delta s^{k-1}_{i}} <1 \land \hat{i}_2 \notin \mathcal{U}_{r}$ respectively.
\subsubsection*{Heuristic \texttt{H2}}
The principle of the second heuristic is based on the observation in
the analysis above. Similarly as in \texttt{H1} the idea is to ensure
that certain component-pairs $(\lambda, s)$ are updated. In
particular, the components with the largest relative error of the
ratio in the left-hand of (\ref{eq:cmpStepSizeAnalysis1}). However,
the set of active constraints at the solution is unknown, instead all
components which could have limited the step size in the previous
iteration are considered in the selection. At iteration $k$, $k \ge
1$, $\mathcal{U}^k$ is based on $\mathcal{U}_{r}$ of
  Proposition~\ref{prop:Uk}, but at most $r$ indices are replaced by
  the indices corresponding to the, at most, $r$ largest quantities of
\begin{equation*}
\frac{\vert \lambda^{k}_{i}  /  s^{k}_{i} -  \bar{\lambda}^{k}_{i} /   \bar{s}^{k}_{i}\vert}{\lambda^{k}_{i} / s^{k}_{i} }, \quad i \in \mathcal{H}^k
\end{equation*} 
where $\mathcal{H}^k = \{ i: \Delta \lambda^{k-1}_{i} <0 \land \frac{\lambda^{k-1}_{i}}{-\Delta \lambda^{k-1}_{i}} < 1 \} \cup \{ i: \Delta s^{k-1}_{i} <0 \land \frac{s^{k-1}_{i}}{-\Delta s^{k-1}_{i}} < 1 \}$. 

\subsubsection*{Heuristic test}
To demonstrate the impact of heuristic \texttt{H1} and \texttt{H2} we
show results in Fig.~\ref{fig:stepSizesWheur} which are analogous to
those in the left-hand side of Fig.~\ref{fig:stepSizes}. The methods \texttt{mN-r($r$)-H1} and \texttt{mN-r($r$)-H2} denote \texttt{mN-r($r$)}, $r = [2, 4]$, combined with heuristic \texttt{H1} and \texttt{H2} respectively. In addition, Table~\ref{table:aveStep} shows the average of the sum $(\alpha^k_P + \alpha^k_D)/2$ for a subset of the benchmark problems. The subset contains problems from each of the sets $\mathcal{S}$, $\mathcal{M}$ and $\mathcal{L}$.
\begin{figure}[H]
\centering
\parbox{6.2cm}{
\includegraphics[width=6.2cm]{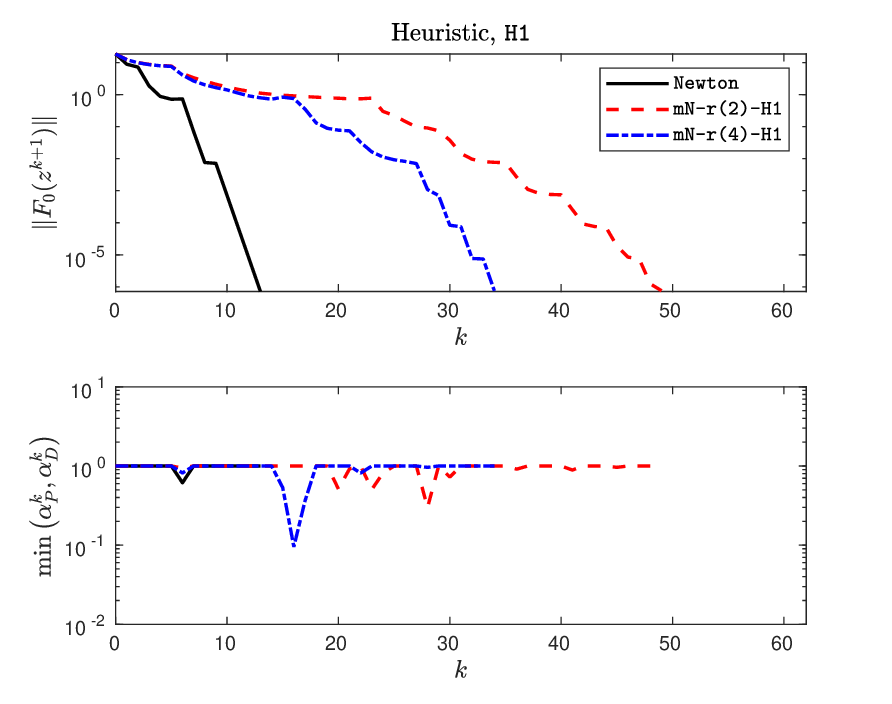}}
\hspace{-8mm}
\begin{minipage}{6.2cm}
\includegraphics[width=6.2cm]{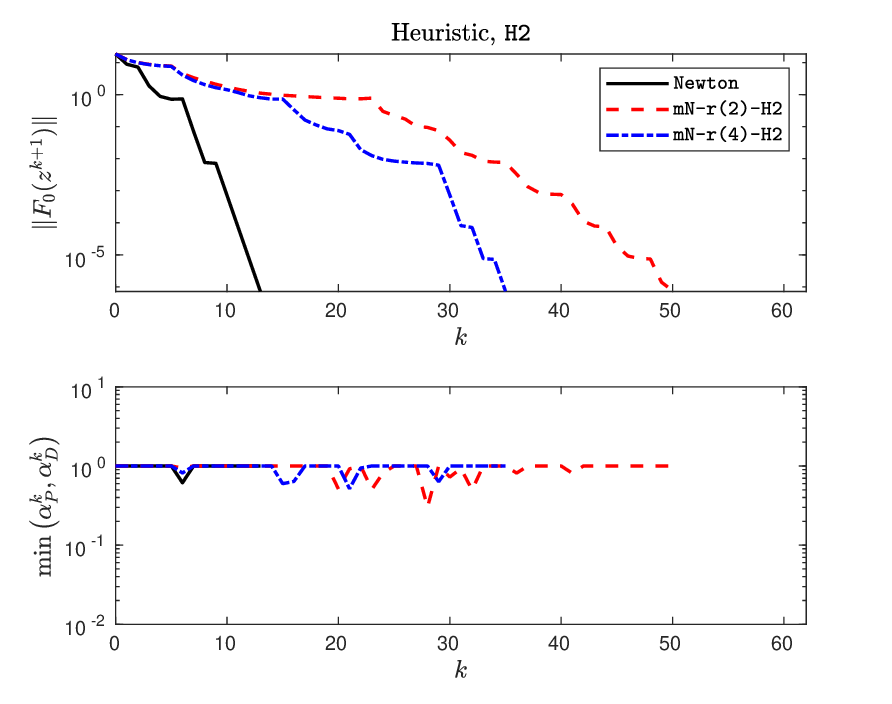} 
\end{minipage}
\caption{Step sizes and convergence for \texttt{mN-r($r$)},  $r = [2, 4]$, combined with heuristic \texttt{H1} and \texttt{H2} on benchmark problem \texttt{qafiro}.}
\label{fig:stepSizesWheur}
\end{figure}
The results of Fig.~\ref{fig:stepSizesWheur} show that \texttt{mN-r($r$)-H1} and \texttt{mN-r($r$)-H2}, $r = [2, 4]$, use larger step sizes, and converges in fewer iterations, compared to \texttt{mN-r($r$)} in Fig.~\ref{fig:stepSizes}. Hence showing that the heuristics \texttt{H1} and \texttt{H2} have the intended effect on benchmark problem \texttt{qafiro}.  
\begin{table}[H]
\caption{Average of the sum $(\alpha^k_P + \alpha^k_D)/2$ for a subset of the benchmark problems.}
\begin{footnotesize}
\begin{tabular}{l|cccccccccc} \label{table:aveStep}
 &  & \texttt{Newton} &  & \multicolumn{3}{c}{\texttt{mN-r(2)}} &  & \multicolumn{3}{c}{\texttt{mN-r(4)}} \\
 &  &        &  &          & \texttt{H1}       & \texttt{H2}      &  &        & \texttt{H1}       & \texttt{H2}      \\ \cline{3-3} \cline{5-7} \cline{9-11} 
\texttt{hs118}    &  & 0.964  & &  0.370  &  \textbf{0.844}  &  0.814  & &  0.513  &  0.817  &  \textbf{0.818}  \\
\texttt{qafiro}   &  & 0.986  & &  0.762  &  \textbf{0.973}  &  0.966  & &  0.846  &  0.957  &  \textbf{0.971}  \\
\texttt{primal1}  &  & 0.965  & &  0.677  &  0.945  &  \textbf{0.956}  & &  0.817  &  0.936  &  \textbf{0.957}  \\
\texttt{dualc8}   &  & 1.000  & &  0.584  &  0.909  &  \textbf{0.911}  & &  0.845  &  0.875  &  \textbf{0.916}  \\
\texttt{laser}    &  & 0.994  & &  0.072  &  \textbf{0.883}  &  0.723  & &   0.111  &  0.865  &  \textbf{0.890}  \\
\texttt{yao}      &  & 0.958  & &  0.683 &   \textbf{0.859}  &  0.444  & & 0.732   &   \textbf{0.753}  &  0.462  \\
\texttt{stcqp2}   &  & 0.999  & &  0.705 &   \textbf{0.971}  &  0.916  & & 0.699  &   0.957  &   \textbf{0.976}  \\
\texttt{ubh1}  &  & 1.000  & &  0.695  &  0.995  &  \textbf{0.998}  & &  0.699  &   0.995 &    \textbf{0.999}\\
\texttt{aug2dqp}  &  & 0.996  & &  0.832 &   0.952  &  \textbf{0.970}  & & 0.589  &  \textbf{0.997}  &  0.994  
\end{tabular}
\end{footnotesize}
\end{table}
The results in Table~\ref{table:aveStep} indicate that \texttt{H1} and
\texttt{H2} have the intended effect on more benchmark problems but
also that they are not effective on all problems. Part of the reason
is that the rank of each update matrix is restricted to
2 and 4 respectively, but for some problems there are many components which limit
the step size. For instance, \texttt{H1} can at most take two of these
and \texttt{H2} can, at most, take as many as the maximum rank of the
update. The results for problem \texttt{yao} is an example where
\texttt{H2} does worse than without heuristic. The heuristic replaced
indices in $\mathcal{U}_r$ which caused low quality in the search
direction, indicating that it may be beneficial to also update the
information that is suggested in Proposition~\ref{prop:Uk}. Numerical
experiments have further shown small step sizes can be avoided by
allowing update matrices of varying rank. In particular, update matrices where the rank
is determined by the components which potentially limit the step
size. However, numerical experiments have also shown that avoiding
small step sizes is not sufficient to obtain increased convergence speed. Similarly as in the results of Gondizo and Sobral for quasi-Newton approaches \cite{GonSob2019}, numerical experiments have shown that it is occasionally important to use the Jacobian at $z^k$ instead of $\bar z^k$  to improve convergence.  In light of this, we will limit the number of allowed steps before the modified Jacobian is refactorized. In consequence the total rank change of an actual Jacobian will be limited. In the following numerical simulations, the modified Newton approaches \texttt{mN-r($r$)}, \texttt{mN-r($r$)-H1} and \texttt{mN-r($r$)-H2}  include a refactorization strategy on the form
\begin{equation} \label{eq:refact}
B^k = \begin{cases} F'(z^k) & k=0, l+1, 2l+2, 3l+3, \dots, \\
B^{k-1} + U^k & k \neq 0, l+1, 2l+2, 3l+3, \dots,
\end{cases}
\end{equation} 
where $U^k$ is given by (\ref{eq:Uheuristic}) for index
sets $\mathcal{U}^k$ corresponding to \texttt{H1}, \texttt{H2}
for  \texttt{mN-r($r$)-H1}, \texttt{mN-r($r$)-H2},
 and $\mathcal{U}_r$ of Propositon~\ref{prop:Uk} for \texttt{mN-r($r$)}, respectively.

In general, other refactorization strategies or dynamical procedures may be considered, e.g.,~instead of accepting all steps, refactor or increase the rank of the update matrix if a particular step is deemed bad for some reason. 

\section{Numerical results} \label{sec:numRes}
In this section we give results on the form of number of iterations and factorizations. The results are meant to give an initial indication of the performance of the proposed modified Newton approach in a basic interior-point framework. The results are for the methods \texttt{Newton}, \texttt{mN-r($r$)}, \texttt{mN-r($r$)-H1} and \texttt{mN-r($r$)-H2}, with $r =[2, \> 16]$, described in Section~\ref{sec:impl}. In essence the methods differ in how the search direction is computed. The direction at iteration $k$ satisfies (\ref{eq:PDnewtSyst}) in \texttt{Newton} and (\ref{eq:genk:ModNewtDz}) in the \texttt{mN}-methods. In contrast to Section~\ref{sec:impl}, here the \texttt{mN}-methods also include the refactorization strategy described in (\ref{eq:refact}). Due to the large variety in number of inequality constraints and number of variables in each benchmark problem, the parameter $l$ of (\ref{eq:refact}) was defined as the closest integer to $l_{\mathcal{S}}$,  $l_{\mathcal{M}}$ and $l_{\mathcal{L}} $ for $p \in \mathcal{S}$, $p \in \mathcal{M}$ and $p \in \mathcal{L}$ respectively, see Table~\ref{table:l} for the specific values. The computational cost of a refactorization of the unreduced, reduced and condensed system all depends on the sparsity structure given by the specific problem. We therefore choose $l_{\mathcal{S}}$,  $l_{\mathcal{M}}$ and $l_{\mathcal{L}}$  such that they relate to the full rank change that corresponds to a new factorization. The values of Table~\ref{table:l} were chosen such that a low-rank update is performed as long as the total rank change on an actual Jacobian is not larger than a factor of $1/2$, $1/10$ and $1/100$ for the small, medium and large problems respectively. 
Moreover, the parameters of Algorithm~\ref{alg:simpleIPM} were chosen as follows: $\sigma = 0.1$, termination tolerance $\varepsilon^{tol} = 10^{-6}$ for the small and medium sized problems and $\varepsilon^{tol} = 10^{-5}$ for the large sized problems. In each run, the initial $(x^0, \lambda^0, s^0)$ was found with \texttt{Newton}, with stopping criteria corresponding to the requirement on the initial solution in Algorithm~\ref{alg:simpleIPM}. 
\begin{table}[H]
\begin{center} 
\caption{Refactorization parameter for the different problem sizes, $m_{in}$ is the number of inequality constraints.}  \label{table:l}
\begin{tabular}{c|c|c}
 $l_{\mathcal{S}}$ &  $l_{\mathcal{M}}$ &  $l_{\mathcal{L}}$ \\ \hline
$   m_{in} r /2$ & $   m_{in} r/10$ & $  m_{in} r/100$ 
\end{tabular}
\end{center}
\end{table}
Results are first shown for problems in the set $\mathcal{S}$, tables~\ref{table:S:mu1}-\ref{table:S:mu1em3-mu1em6}, thereafter for problems in $\mathcal{M}$, tables~\ref{table:M:mu1}-\ref{table:M:mu1em6}, and finally for problems in $\mathcal{L}$, tables~\ref{table:L:mu1}-\ref{table:L:mu1em6}. The results are for three different regions depending on $\mu^0$, namely $\mu^0 = [1, \> 10^{-3}, \> 10^{-6}]$. The intention is to illustrate the performance of the modified Newton approach, both close to a solution and in a larger region where the theoretical results are not expected to hold. The results corresponding to $r = 16$ for problems in $\mathcal{S}$ are omitted due to similarity of the performance caused by the refactorization strategy. In all tables the initial factorization of $B^0 = F'(z^0)$ is counted as one factorization. In essence, ``1'' in the factorization column, $\texttt{F}$, means that no refactorization was performed. Moreover, ``-'' denotes that the method failed to converge within a maximum number of iterations. For each problem the maximum number of iterations was set to $10N$, where $N$ depends on the number of variables, see Appendix for the value of $N$ associated with each problem.
\begin{table}[H] 
\begin{center}
\begin{tiny}
\caption{Number of factorizations and iterations for problems in $\mathcal{S}$ with $\mu^0 = 1$.}\label{table:S:mu1}

\end{center}
\end{tiny}
\end{table}
The results in tables~\ref{table:S:mu1}-\ref{table:L:mu1em6} indicate that the number of factorizations compared to those done by \texttt{Newton} may be reduced by instead performing low-rank updates, as with \texttt{mN-r($r$)}, $r = [2, \> 16]$. The reduced number of factorizations is however often at the expense of performing additional iterations with low-rank updates. The total number of iterations and/or factorizations are for many problems, but not for all, further reduced with heuristics \texttt{H1} and \texttt{H2}, as shown by the results corresponding to \texttt{mN-r($r$)-H1} and \texttt{mN-r($r$)-H2}, $r = [2, \> 16]$. This behavior is most significant in the simulations with larger values of $\mu$, as shown in Table~\ref{table:S:mu1}, Table~\ref{table:M:mu1} and Table~\ref{table:L:mu1}. 
Comparing the number of iterations in the \texttt{mN}-methods gives an indication of whether the heuristics have been active on a specific problem. The results in tables~\ref{table:S:mu1}-\ref{table:L:mu1em6} show that the performance of the heuristics varies with each problem, and in addition with $\mu^0$. In particular, the results show that the heuristics are most effective, and hence more likely to be active, at larger values of $\mu$. For smaller values of $\mu$, the \texttt{mN}-methods show similar performance, see Table~\ref{table:M:mu1em6}, Table~\ref{table:L:mu1em6} and the right-hand side of Table~\ref{table:S:mu1em3-mu1em6}. This indicates that the heuristics are less likely to have been active for smaller values of $\mu$. Consequently, the \texttt{mN}-methods are less likely to produce limiting steps at small values of $\mu$ on the benchmark problems. 
The observation is in line with the results of the theoretical sections.
Overall \texttt{mN-r(2)} fails to converge for two problems within a maximum number of iterations due to small step sizes. This is overcome with both \texttt{H1} and \texttt{H2}, as shown by the corresponding results in Table~\ref{table:M:mu1}. 

Numerical experiments have further shown that decreasing the refactorization parameters of Table~\ref{table:l} decreases the number of iterations, and increases the number of factorizations done by the \texttt{mN}-methods. In general, an increased rank of each update matrix reduces the number of iterations overall, but due to the refactorization strategy the methods are required to refactorize more often. 

Table~\ref{table:M:mu1em6}, Table~\ref{table:L:mu1em6} and the
right-hand side of Table~\ref{table:S:mu1em3-mu1em6} show that low-rank
updates give convergence for small values of $\mu$ in many of the
benchmark problems, even for update matrices of rank-two on large
scale problems.

Finally, we want to mention simplified Newton, i.e., \texttt{mN-r($0$)} without a refactorization strategy. Our simulations showed that this approach was significantly less robust. It is not clear to us how to deduce a refactorization strategy for \texttt{mN-r($0$)} that gives a fair comparison to the results of tables~\ref{table:S:mu1}-\ref{table:L:mu1em6}. Therefore, we have omitted specific results.

\section{Conclusion} \label{sec:conc}
In this work we have proposed and motivated a structured modified Newton approach
for solving systems of nonlinear equations that arise in interior-point methods for quadratic programming. In essence, the Jacobian of each Newton system is modified to a previous Jacobian plus one low-rank update matrix per succeeding iteration. The modified Jacobian maintains the sparsity pattern of the Jacobian and may thus be viewed as a Jacobian evaluated at a different point. The approach may in consequence be interpreted in the framework of previous works on primal-dual interior-point methods, e.g.,~effects of finite-precision arithmetic, stability, convergence and solution techniques. 

Numerical simulations have shown that small step sizes can have negative effects on convergence with the modified Newton approach, especially at larger values of $\mu$. In order to decrease these negative effects, we have constructed and motivated two heuristics. Further numerical simulations have shown that the two heuristics often increase the step sizes but also that this is not always sufficient to improve convergence. We have therefore also suggested a refactorization strategy. The heuristics and refactorization strategy that we have proposed are merely options, however, the framework allows for both different versions of these as well as other heuristics and/or strategies. 

In addition, we have performed numerical simulations on a set of convex quadratic benchmark problems. The results indicated that the number of factorizations compared to those of the Newton based method can be reduced, often at the expense of performing more iterations with low-rank updates. The total number of iterations and/or factorizations were for many problems, but not for all, further reduced with the two heuristics. Although the theoretical results are in the asymptotic region as $\mu \to 0$, or when the rank of each update matrix is sufficiently large, we still obtain interesting numerical results for larger values of $\mu$ and update matrices of low-rank. 

Our work is meant to contribute to the theoretical and numerical
understanding of modified Newton approaches for solving systems of
nonlinear equations that arise in interior-point methods. We have laid
a foundation that may be adapted and included in more sophisticated
interior-point solvers as well as contribute to the development of
preconditioners. We have limited ourselves to a numerical study of the
accuracy of the approaches in a high-level language. To get a full
understanding of the practical performance, precise ways of solving
the updated modified Newton systems would have to be investigated
further.

\appendix 
\section{Appendix}
For completeness we state the Eckart-Young-Mirsky theorem.
\begin{theorem}[Eckart-Young-Mirsky theorem] \label{thm:EckartYoundMirsky}
Let $A \in \mathbb{R}^{m \times n}$  be of rank $r$ and denote its singular value decomposition by $U \Sigma V^T$, where $U \in \mathbb{R}^{m \times m}$, $V \in \mathbb{R}^{n \times n}$ are orthogonal matrices, and $\Sigma = \diag(\sigma) \in \mathbb{R}^{m \times n}$, for $\sigma_1  \geq \dots \geq \sigma_p \geq 0 $, where $p = \min\{m,n\}$. For a given $q$, $0 < q \le r$, the optimal solution of
\begin{equation*}
\begin{array}{cl} 
\minimize{\tilde A \in \mathbb{R}^{m \times n}} & {\| A - \tilde A \|} \\
 \subject &  \rank{ ( \tilde A )} \le q,
\end{array} 
\end{equation*}
where $\| . \|$ is either 2-norm or Frobenius norm, is \[
A_* = \sum_{i=1}^q \sigma_i u_i v_i^T, \]
with $u_i$ and $v_i$ as the $i$th column of $U$ and $i$th column of $V$ respectively. 
\end{theorem}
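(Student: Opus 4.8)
The plan is to handle the two norms in parallel by reducing everything to a single lower bound. First I would record that $A_*$ is admissible: since $q\le r$ we have $\sigma_q>0$, so $A_*$ has rank exactly $q$; and because the Frobenius and spectral norms are unitarily invariant, $A-A_* = U(\Sigma-\Sigma_q)V^T$ where $\Sigma_q$ keeps only the first $q$ diagonal entries, so $\|A-A_*\|_2 = \sigma_{q+1}$ and $\|A-A_*\|_F^2 = \sum_{i=q+1}^{r}\sigma_i^2$ (with the convention $\sigma_{q+1}=0$ when $q=r$). It therefore suffices to show that every $\tilde A$ with $\mathrm{rank}(\tilde A)\le q$ satisfies $\|A-\tilde A\| \ge \|A-A_*\|$ in the respective norm.

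For the spectral norm I would use a dimension count. If $\mathrm{rank}(\tilde A)\le q$ then its null space $\mathcal N$ has dimension at least $n-q$, while $\mathcal V = \mathrm{span}\{v_1,\dots,v_{q+1}\}$ has dimension $q+1$; since $(n-q)+(q+1)>n$, there is a unit vector $w = \sum_{i=1}^{q+1}c_i v_i \in \mathcal N\cap\mathcal V$. Then $(A-\tilde A)w = Aw = \sum_{i=1}^{q+1}c_i\sigma_i u_i$, so $\|A-\tilde A\|_2^2 \ge \|Aw\|^2 = \sum_{i=1}^{q+1}c_i^2\sigma_i^2 \ge \sigma_{q+1}^2\sum_{i=1}^{q+1}c_i^2 = \sigma_{q+1}^2$, using $\sigma_i\ge\sigma_{q+1}$ for $i\le q+1$. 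Hence $\|A-\tilde A\|_2 \ge \sigma_{q+1} = \|A-A_*\|_2$.

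For the Frobenius norm I would invoke the singular-value perturbation bound $\sigma_i(A-\tilde A)\ge \sigma_{i+q}(A)$, which follows from Weyl's inequality $\sigma_{i+j-1}(X+Y)\le\sigma_i(X)+\sigma_j(Y)$ with $X=A-\tilde A$, $Y=\tilde A$, $j=q+1$, so that $\sigma_{q+1}(\tilde A)=0$. Summing squares over $i$ (recalling $m\le n$) gives $\|A-\tilde A\|_F^2 = \sum_{i\ge 1}\sigma_i(A-\tilde A)^2 \ge \sum_{i\ge 1}\sigma_{i+q}(A)^2 = \sum_{i=q+1}^{r}\sigma_i(A)^2 = \|A-A_*\|_F^2$. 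An alternative that avoids citing Weyl is to use unitary invariance to replace $A$ by $\Sigma$ and $\tilde A$ by $B=U^T\tilde A V$ (still of rank $\le q$) and argue directly that an optimal $B$ keeps the $q$ largest diagonal entries, but that direct argument is more fiddly.

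I expect the Frobenius case to be the main obstacle: the spectral-norm argument is essentially self-contained, whereas a clean treatment of the Frobenius case either quotes the Weyl/interlacing inequality for singular values or devotes a short paragraph to proving it from the Courant--Fischer min--max characterization; I would opt to cite it. One minor caveat worth a remark: the minimizer is not unique when $\sigma_q=\sigma_{q+1}$, so ``the optimal solution'' should be understood as ``an optimal solution''; this does not affect any of the estimates above.
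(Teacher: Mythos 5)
Your proposal is correct. Note that the paper does not actually prove this theorem at all: it is a classical result quoted from the literature, and the ``proof'' in the appendix is just the citation to Golub and Van Loan, Ch.~2. So you are supplying what the paper outsources, and you do it with the standard textbook argument: the admissibility of $A_*$ and the values $\|A-A_*\|_2=\sigma_{q+1}$, $\|A-A_*\|_F^2=\sum_{i>q}\sigma_i^2$ via unitary invariance; the spectral-norm lower bound by intersecting the null space of $\tilde A$ (dimension $\ge n-q$) with $\mathrm{span}\{v_1,\dots,v_{q+1}\}$, which is exactly the classical dimension-count argument found in the cited reference; and the Frobenius-norm lower bound via the singular-value perturbation inequality $\sigma_{i+q}(A)\le\sigma_i(A-\tilde A)$, obtained from Weyl's inequality with $\sigma_{q+1}(\tilde A)=0$. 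The only ingredient you do not prove is Weyl's inequality itself, which you cite; given that the paper cites the entire theorem, this is a reasonable level of rigor, though a fully self-contained write-up would add the short Courant--Fischer argument you mention. Your two side remarks are also correct and worth keeping: the convention $\sigma_{q+1}=0$ when $q=r$ handles the degenerate case, and the minimizer is only unique up to the choice of singular subspaces when $\sigma_q=\sigma_{q+1}$ (and, for the spectral norm, not even then), so ``the optimal solution'' should indeed be read as ``an optimal solution''.
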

\begin{proof}
See \cite[Ch.~2]{GolVan13}.
\end{proof}

The following lemma contains the singular value decomposition of $\Delta F'( \Delta z)$ given in (\ref{eq:dFp}). 
\begin{lemma} \label{lemma:SVDdFp}
For $\Delta z = ( \Delta x, \Delta \lambda, \Delta s) \in \mathbb{R}^{(n+2m)}$ let $\Delta F'( \Delta z) \in \mathbb{R}^{(n+2m) \times (n+2m)}$  be defined by (\ref{eq:dFp}). 
The singular value decomposition of $\Delta F'( \Delta z)$ can then be written as \[
\Delta F'( \Delta z)  = \sum_{i\in \mathcal{V}} e_{n+m+i} \left( \Delta s_i e_{n+i} + \Delta \lambda_i e_{m+n+i} \right)^T,
\]
where $\mathcal{V}$ is a set of indices, $i=1,\dots, m$, ordered such that $\sqrt{(\Delta \lambda_i)^2 + (\Delta s_i)^2}$ are in descending order.
\end{lemma}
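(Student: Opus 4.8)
The plan is to exhibit the asserted expression directly as a singular value decomposition by reading off the block structure of $\Delta F'(\Delta z)$ in (\ref{eq:dFp}) and then verifying the orthonormality requirements by hand. First I would observe that the only nonzero rows of $\Delta F'(\Delta z)$ are those indexed $n+m+i$, $i=1,\dots,m$, and that row $n+m+i$ carries the entry $\Delta s_i$ in column $n+i$ (from the $\Delta S$ block, which sits in the $\lambda$-columns) and the entry $\Delta \lambda_i$ in column $n+m+i$ (from the $\Delta \Lambdait$ block, which sits in the $s$-columns). Hence
\[
\Delta F'(\Delta z) = \sum_{i=1}^m e_{n+m+i}\left(\Delta s_i\, e_{n+i} + \Delta \lambda_i\, e_{n+m+i}\right)^T,
\]
which coincides with the stated sum over $\mathcal{V}$ once the terms are listed in descending order of $\sqrt{(\Delta\lambda_i)^2+(\Delta s_i)^2}$; the indices for which this quantity vanishes contribute the zero matrix and may be dropped.

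Next I would set $\sigma_i=\sqrt{(\Delta\lambda_i)^2+(\Delta s_i)^2}$ and, for each $i$ with $\sigma_i>0$, factor the $i$th rank-one term as $\sigma_i\, u_i v_i^T$ with $u_i = e_{n+m+i}$ and $v_i = \sigma_i^{-1}\bigl(\Delta s_i\, e_{n+i}+\Delta\lambda_i\, e_{n+m+i}\bigr)$, so that $\|u_i\|=\|v_i\|=1$ by construction. The left vectors $\{u_i\}$ are distinct unit coordinate vectors and are therefore orthonormal. For the right vectors, $v_i$ is supported on the coordinate pair $\{n+i,\,n+m+i\}$, and these pairs are pairwise disjoint for distinct $i$ (the two index blocks $\{n+1,\dots,n+m\}$ and $\{n+m+1,\dots,n+2m\}$ are disjoint, and within each block $n+i\neq n+j$ for $i\neq j$); hence $v_i^T v_j=0$ whenever $i\neq j$. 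Consequently $\sum_{i:\sigma_i>0}\sigma_i u_i v_i^T$ is a valid reduced singular value decomposition of $\Delta F'(\Delta z)$, so its nonzero singular values are exactly the positive $\sigma_i$, and if a full decomposition is wanted one simply completes $\{u_i\}$ and $\{v_i\}$ to orthonormal bases of $\mathbb{R}^{n+2m}$ and appends zero singular values.

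I do not expect any real obstacle here; the argument is a routine verification. The only point needing care is the index bookkeeping — keeping the $(x,\lambda,s)$ ordering straight so that $\Delta S$ occupies the $\lambda$-columns and $\Delta\Lambdait$ the $s$-columns — together with the observation that what turns the naive rank-one splitting into a genuine SVD is precisely the mutual orthogonality of the right singular vectors, which in turn rests on the disjointness of the supports $\{n+i,\,n+m+i\}$. Ordering the surviving rank-one terms by decreasing $\sigma_i$, as encoded by the index set $\mathcal{V}$, then yields the claimed form.
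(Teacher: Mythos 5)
Your proof is correct. It does, however, take a slightly different route from the paper's. The paper follows the textbook construction of an SVD: it forms the Gram matrices $(\Delta F'(\Delta z))(\Delta F'(\Delta z))^T$ and $(\Delta F'(\Delta z))^T(\Delta F'(\Delta z))$, identifies the left singular vectors $e_{n+m+i}$ and the eigenvalues $(\Delta\lambda_i)^2+(\Delta s_i)^2$ from the first, and then verifies by direct computation that the normalized vectors $v_i=\bigl((\Delta\lambda_i)^2+(\Delta s_i)^2\bigr)^{-1/2}\bigl(\Delta s_i e_{n+i}+\Delta\lambda_i e_{n+m+i}\bigr)$ are an orthonormal eigenvector set of the second. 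You instead read the rank-one expansion $\sum_i e_{n+m+i}(\Delta s_i e_{n+i}+\Delta\lambda_i e_{n+m+i})^T$ directly off the sparsity pattern of (\ref{eq:dFp}), normalize each term as $\sigma_i u_i v_i^T$, and check orthonormality of $\{u_i\}$ (distinct coordinate vectors) and of $\{v_i\}$ (disjoint supports $\{n+i,\,n+m+i\}$), then invoke the standard fact that an expansion $\sum_i\sigma_i u_i v_i^T$ with $\sigma_i>0$ and orthonormal families $\{u_i\}$, $\{v_i\}$ is a reduced SVD, completed to a full one by appending zero singular values. Your argument is the more direct and elementary of the two, bypassing the Gram-matrix eigencomputations; the paper's version makes explicit how the singular values arise as square roots of eigenvalues of $(\Delta S)^2+(\Delta\Lambdait)^2$, which is the form in which they are reused in the surrounding discussion (e.g.\ in (\ref{eq:spectralRadMot1})). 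Both verifications hinge on the same essential observation, namely the mutual orthogonality of the right vectors coming from the disjoint coordinate pairs, and your index bookkeeping ($\Delta S$ in the $\lambda$-columns, $\Delta\Lambdait$ in the $s$-columns) matches the paper's convention.
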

\begin{proof}
The left singular vectors of $\Delta F'( \Delta z)$ are the set of orthonormal eigenvectors of $(\Delta F'(\Delta z)) (\Delta F'(\Delta z))^T$, i.e., vectors $u$ such that 
\begin{equation} \label{eq:lemma:SVD:proof:eq1}
 \begin{pmatrix} 0 & 0 & 0 \\ 0 & 0 & 0\\ 0 & 0 &  (\Delta S)^2 + (\Delta \Lambdait)^2 \end{pmatrix} u = \tilde{\lambda} u.
  \end{equation}
The eigenvectors of  $(\Delta F'(\Delta z)) (\Delta F'(\Delta z))^T$ are $e_i$, $i=1,\dots, n + 2m$ and the eigenpairs, with nonzero eigenvalues, are $( (\Delta \lambda _i)^2 + (\Delta s_i)^2, e_{n+m+i})$, $i =1, \dots, m$. Similarly, the right singular vectors are the set of orthonormal eigenvectors of $(\Delta F'(\Delta z))^T (\Delta F'(\Delta z))$, i.e., vectors $v$ such that 
\begin{equation} \label{eq:lemma:SVD:proof:eq2}
 \begin{pmatrix} 0 & 0 & 0 \\ 0 & (\Delta S)^2 & \Delta S \Delta \Lambdait \\ 0 & \Delta \Lambdait \Delta S &  (\Delta \Lambdait)^2 \end{pmatrix} v = \tilde{\lambda} v.
 \end{equation}
The nonzero eigenvalues of (\ref{eq:lemma:SVD:proof:eq2}) are the same as those in (\ref{eq:lemma:SVD:proof:eq1}). A straightforward calculation shows that the eigenvector corresponding to the $i$th nonzero eigenvalue $v_i = \frac{1}{\sqrt{(\Delta \lambda_i)^2 + (\Delta s_i)^2}}\left( \Delta s_i e_{n+i} + \Delta \lambda_i e_{m+n+i} \right)$, $i=1,\dots,m$, fulfills (\ref{eq:lemma:SVD:proof:eq2}) with ${\tilde{\lambda}_i =(\Delta \lambda _i)^2 +( \Delta s_i)^2}$, $i=1,\dots,m$, and in addition that the set of vectors $v_i$, $i=1,\dots,m$, form an orthornormal set. The singular values of $\Delta F'( \Delta z)$ are then given by $\sqrt{\tilde \lambda_i}$,  $i = 1, \dots,  n + 2m$.
\end{proof}
 
 \subsection*{Proof of Theorem~\ref{thm:dzErr}}
 \begin{proof} $F'(z^+)$ is nonsingular since $z^+ \in \mathcal{B}(z^*, \delta)$. The modified Jacobian can then be written as \[B^+ = F'(z^+)(I-F'(z^+)\inv E^+).\]  By Lemma~\ref{lemma:spectralradiusMot}, $\| F'(z^+) \inv E^+ \|   \le M  C^{(r+1)} \mu$ and hence there exists $\mubar>0$, with $\mubar \le \muhat$, such that for $0<\mu \le \mubar$ it holds that $M  C^{(r+1)} \mu < 1$. In consequence, for $0<\mu \le \mubar$,  $(B^+)^{-1}$ can be expanded as a von Neumann series
\begin{align} \label{eq:thm:proof:1}
(B^+) \inv  & = \left( I - F'(z^+)\inv E^+ \right) \inv F'(z^+) \inv \nonumber \\ 
& = \sum_{j=0}^\infty \left( F'(z^+) \inv E^+ \right)^j  F'(z^+) \inv.
\end{align}
The error with respect to the Newton step $\Delta \hat{z}^+$ can with (\ref{eq:thm:proof:1}) be written as
\begin{align} \label{eq:thm:proof:2}
\Delta \hat{z}^+ - \Delta z^+ & = \Delta \hat{z}^+ + (B^+) \inv F_{\mu^{++}}(z^+)  = \big( I - \big( I - F'(z^+)\inv E^+ \big) \inv \big) \Delta \hat{z}^+\nonumber \\
&= \big( I -  \sum_{j=0}^\infty \big( F'(z^+) \inv E^+ \big)^j \big) \Delta \hat{z}^+ = - \sum_{j=1}^\infty \big( F'(z^+) \inv E^+ \big)^j \Delta \hat{z}^+.
\end{align}
Taking 2-norm on both sides of (\ref{eq:thm:proof:2}) and making use
of norm inequalities give
\begin{equation} \label{eq:thm:proof:3}
\| \Delta \hat{z}^+ - \Delta z^+ \| \le  \sum_{j =1}^\infty \| F'(z^+) \inv E^+ \|^j  \| \Delta \hat{z}^+ \|.
\end{equation}
The sum in (\ref{eq:thm:proof:3}) is a geometric series which is convergent since $\| F'(z^+)\inv E^+ \| < 1$ for $\mu \le \mubar$ and hence
\begin{equation} \label{eq:thm:proof:4}
\| \Delta \hat{z}^+ - \Delta z^+  \| \le  \frac{\| F'(z^+) \inv E^+ \|}{1-\| F'(z^+) \inv E^+ \|}  \| \Delta \hat{z}^+ \|.
\end{equation}
Recall that $\Delta \hat{z}^+$ is the solution of $ F'(z^+) \Delta \hat{z}^+ = - F_{\mu^{++}}(z^+)$ where $z^+ \in \mathcal{B}(z^*, \delta)$ and hence
\begin{align}  \label{eq:thm:proof:6} \
\| \Delta \hat{z}^+ \| & = \|   -F'(z^+) \inv  F_{\mu^{++}}(z^+) \|  \nonumber \\
\nonumber 
& =  \|  F'(z^+) \inv \big(  F_{\mu^{++}}(z^{\mu^{++}}) - F_{\mu^{++}}(z^+) - F'(z^+) (z^{\mu^{++}} - z^+)  \big)  +  (z^{\mu^{++}} - z^+) \|  \nonumber \\
& \leq \frac{M}{2}  \| z^+ - z^{\mu^{++}} \|^2 + \| z^+ - z^{\mu^{++}} \|,
\end{align}
where $z^{\mu^{++}} : F_{\mu^{++}} (z^{\mu^{++}}) = 0$, and it has been used that the Lipschitz constant of $F'$ equals one, as discussed in the end of Section~\ref{sec:genk}. Moreover,
\begin{align} \label{eq:A7}
\| z^+ - z^{\mu^{++}} \| & = \| z - F'(z)\inv F_{\mu^+}  (z)  - z^{\mu^{++}}  \| \nonumber \\
& =  \| F'(z) \inv \big(  F_{\mu^+} (z^{\mu^+}) - F_{\mu^+} (z) - F'(z) ( z^{\mu^+} - z)  \big) +  ( z^{\mu^+} - z^{\mu^{++}}) \| \nonumber \\
& \leq \frac{M}{2} \|z^{\mu^+} - z\|^2 + \|  z^{\mu^+} - z^{\mu^{++}} \| \nonumber \\
& =  \frac{M}{2} \|z^{\mu^+} -z^\mu + z^\mu - z\|^2 + \|  z^{\mu^+} - z^{\mu^{++}} \| \nonumber \\
& \leq   \frac{M}{2}  \big(  \| z - z^{\mu} \|^2 + 2  \| z -z^{\mu} \|\| z^\mu -z^{\mu^+} \| + \|z^{\mu} - z^{\mu^+} \|^2  \big) \nonumber \\
& \quad +\|  z^{\mu^+} - z^{\mu^{++}} \|, 
\end{align}
where  $z^{\mu^{+}}: F_{\mu^{+}} (z^{\mu^{+}}) = 0$.
Note that $\| z^\mu - z^{\mu^+ } \| \leq C_3 (\mu - \mu^+) = C_3 (1-\sigma) \mu$, as  $z^{\mu}$,  $0 < \mu \leq \muhat$,  is Lipschitz continuous by Lemma~\ref{lemma:LipcPath}.
By assumption there exist a constant $C>0$ such that $\| F_\mu (z) \| \le C \mu$. Lemma~\ref{lemma:FmuBound} and Lemma~\ref{lemma:LipcPath} applied to (\ref{eq:A7}) then give
\begin{align} \label{eq:thm:proof:7}
\| z^+ - z^{\mu^{++}} \| & \leq \frac{M}{2}  \left( \frac{C^2}{C_4^2}  + 2  \frac{C}{C_4} C_3 (1-\sigma)  +  C_3^2 (1-\sigma)^2   \right) \mu^2 +   C_3(1 - \sigma^+) \sigma \mu \nonumber \\
& = C_3(1 - \sigma^+) \sigma \mu + \mathcal{O}(\mu^2).
\end{align}
A combination of (\ref{eq:thm:proof:6}) and (\ref{eq:thm:proof:7}) gives
\begin{equation} \label{eq:thm:proof:8n}
\| \Delta \hat{z}^+ \| \leq C_3(1 - \sigma^+) \sigma \mu + \mathcal{O}(\mu^2).
\end{equation}
Insertion of (\ref{eq:thm:proof:8n}) into (\ref{eq:thm:proof:4}) while making use of Lemma~\ref{lemma:spectralradiusMot} gives the result.
 \end{proof}

\subsection*{Pre-processing of benchmark problems}
As the benchmark problems in general also contain equality constraints, each problem was pre-processed and put on the form 
\begin{equation} \label{eq:optPbProb}
\begin{array}{cl} 
\minimize{x\in \mathbb{R}^n} & { \frac{1}{2} x^T H x + c^T x} \\
 \subject & A_{eq} x = b_{eq}  \\
                  & A_{in}  x \ge b_{in} ,
\end{array} 
\end{equation}
where $H \in \mathbb{R}^{n \times n}$, $c \in \mathbb{R}^n$, $A_{eq} \in \mathbb{R}^{m_{eq} \times n }$, $b_{eq} \in \mathbb{R}^{m_{eq}}$, $A_{in} \in \mathbb{R}^{m_{in} \times n }$, and $b_{in} \in \mathbb{R}^{m_{in}}$. First-order necessary conditions for a local minimizer of (\ref{eq:optPbProb}) can be stated as: 
i)~$Hx + c -  A_{eq}^T \lambda_{eq} - A_{in}^T \lambda_{in} = 0$, 
ii)~$A_{eq} x = b_{eq}$, 
iii)~$A_{in} x - s = b_{in} $. 
iv)~$s \cdot \lambda_{in} = 0$,
v)~$s \geq  0$, 
vi)~$\lambda_{in} \ge 0$, for vectors 
$\lambda_{eq} \in \mathcal{R}^{m_{eq}}$,  $\lambda_{in} \in \mathcal{R}^{m_{in}}$ and $s \in \mathcal{R}^{m_{in}}$. Similarly as in Section~\ref{sec:Background}, define  $F_{\mu}:\mathbb{R}^{n+m_{eq}+2m_{in}} \rightarrow \mathbb{R}^{n+m_{eq}+2m_{in}} $ by
\begin{equation} \label{eq:benchmarkproblemsFmu}
F_{\mu}(z) = \begin{pmatrix}
Hx +c - A_{eq}^T \lambda_{eq}-  A_{in}^T\lambda_{in} \\
A_{eq} x - b_{eq} \\
A_{in} x-s-b_{in} \\
\Lambdait_{in} S e - \mu e
\end{pmatrix}, \mbox{ with }z = (x, \lambda_{eq}, \lambda_{in},s).
\end{equation}
Primal-dual interior-point methods involve solving or approximately solving  $F_{\mu}(z) = 0$ for a decreasing sequence of $\mu>0$ while maintaining $\lambda_{in} > 0$ and $s>0$. Application of Newton iterations gives systems on the form (\ref{eq:PDnewtSyst}) with $F_\mu(z)$ as in (\ref{eq:benchmarkproblemsFmu}), 
$\Delta \hat{z} = (\Delta \hat{x}, \Delta \hat{\lambda}_{eq},\Delta \hat{\lambda}_{in},  \Delta \hat{s})$ and $F': \mathbb{R}^{n+m_{eq}+2m_{in}} \rightarrow \mathbb{R}^{(n+m_{eq}+2m_{in}) \times (n+m_{eq}+2m_{in})}$ defined by
\begin{equation}\label{App:eq:Fp}
F'(z)  = \begin{pmatrix}
H & -A_{eq}^T &  -A_{in}^T & \\
A_{eq} &   &   &  \\
A_{in} &   &   & -I \\
  &   &  S & \Lambdait_{in}
\end{pmatrix}.
\end{equation}

\subsection*{Problem data}
Number of $x$-variables, equality constraints, inequality constraints and total number of variables in the primal-dual formulation for problems in the sets $\mathcal{S}$, $\mathcal{M}$ and $\mathcal{L}$ are shown in Table~\ref{table:probData:S} and Table~\ref{table:probData:ML} respectively.
\begin{table}[H]
\begin{tiny}
\begin{center}
\caption{Details on problem size for problems $p \in \mathcal{S}$.} \label{table:probData:S}
\begin{tabular}{l|rrrr} 
  & \multicolumn{1}{c}{$n$} &   \multicolumn{1}{c}{$m_{eq}$}  &  \multicolumn{1}{c}{$m_{in}$}   &  \multicolumn{1}{c}{$N$}  \\ \hline
     \texttt{cvxqp1$\_$s}     &    100      &      50      &     200      &     350     \\
     \texttt{cvxqp2$\_$s}     &    100      &      25      &     200      &     325     \\
     \texttt{cvxqp3$\_$s}     &    100      &      75      &     200      &     375     \\
     \texttt{dual1}        &     85      &       1      &     170      &     256     \\
     \texttt{dual2}        &     96      &       1      &     192      &     289     \\
     \texttt{dual3}        &    111      &       1      &     222      &     334     \\
     \texttt{dual4}        &     75      &       1      &     150      &     226     \\
     \texttt{dualc1}       &      9      &       1      &     232      &     242     \\
     \texttt{dualc2}       &      7      &       1      &     242      &     250     \\
     \texttt{dualc5}       &      8      &       1      &     293      &     302     \\
     \texttt{qafiro}       &     32      &       8      &      51      &      91     \\
     \texttt{hs118}        &     15      &       0      &      59      &      74     \\
     \texttt{hs268}        &      5      &       0      &       5      &      10     \\
     \texttt{hs53}         &      5      &       3      &      10      &      18     \\
     \texttt{hs76}         &      4      &       0      &       7      &      11     \\
     \texttt{lotschd}      &     12      &       7      &      12      &      31     \\
     \texttt{primal1}      &    325      &       0      &      86      &     411     \\
     \texttt{primalc1}     &    230      &       0      &     224      &     454     \\
     \texttt{primalc2}     &    231      &       0      &     236      &     467     \\
     \texttt{qadlittl}     &     96      &      14      &     137      &     247     \\
     \texttt{qisrael}      &    142      &       0      &     316      &     458     \\
     \texttt{qpcblend}     &     83      &      43      &     114      &     240     \\
     \texttt{qscagr7}      &    140      &      84      &     185      &     409     \\
     \texttt{qshare2b}     &     79      &      13      &     162      &     254     \\
     \texttt{s268}         &      5      &       0      &       5      &      10      
\end{tabular}
\end{center}
\end{tiny}
\end{table}

\begin{table}[H]
\begin{tiny}
\begin{center}
\caption{Details on problem size for problems $p \in \mathcal{M}$ and $p \in \mathcal{L}$.} \label{table:probData:ML}
\begin{tabular}{l|rrrr} 
  & \multicolumn{1}{c}{$n$} &   \multicolumn{1}{c}{$m_{eq}$}  &  \multicolumn{1}{c}{$m_{in}$}   &  \multicolumn{1}{c}{$N$}  \\ \hline
     \texttt{cvxqp1$\_$m}     &    1000     &      500     &     2000     &     3500    \\
     \texttt{cvxqp2$\_$m}     &    1000     &      250     &     2000     &     3250    \\
     \texttt{cvxqp3$\_$m}     &    1000     &      750     &     2000     &     3750    \\
     \texttt{dualc8}       &       8     &        1     &      518     &      527    \\
     \texttt{gouldqp2}     &     699     &      349     &     1398     &     2446    \\
     \texttt{gouldqp3}     &     699     &      349     &     1398     &     2446    \\
     \texttt{ksip}         &      20     &        0     &     1001     &     1021    \\
     \texttt{laser}        &    1002     &        0     &     2000     &     3002    \\
     \texttt{primal2}      &     649     &        0     &       97     &      746    \\
     \texttt{primal3}      &     745     &        0     &      112     &      857    \\
     \texttt{primal4}      &    1489     &        0     &       76     &     1565    \\
     \texttt{primalc5}     &     287     &        0     &      286     &      573    \\
     \texttt{primalc8}     &     520     &        0     &      511     &     1031    \\
     \texttt{q25fv47}      &    1571     &      515     &     1876     &     3962    \\
     \texttt{qgrow15}      &     645     &      300     &     1245     &     2190    \\
     \texttt{qgrow22}      &     946     &      440     &     1826     &     3212    \\
     \texttt{qgrow7}       &     301     &      140     &      581     &     1022    \\
     \texttt{qshell}       &    1525     &      534     &     1644     &     3703    \\
     \texttt{qpcstair}     &     385     &      209     &      532     &     1126    \\
     \texttt{qcapri}       &     337     &      142     &      583     &     1062    \\
     \texttt{qsctap1}      &     480     &      120     &      660     &     1260    \\
     \texttt{qsctap2}      &    1880     &      470     &     2500     &     4850    \\
     \texttt{qsctap3}      &    2480     &      620     &     3340     &     6440    \\
     \texttt{qsc205}       &     202     &       90     &      315     &      607    \\
     \texttt{qscagr25}     &     500     &      300     &      671     &     1471    \\
     \texttt{qscsd1}       &     760     &       77     &      760     &     1597    \\
     \texttt{qscsd6}       &    1350     &      147     &     1350     &     2847    \\
     \texttt{qscsd8}       &    2750     &      397     &     2750     &     5897    \\
     \texttt{qshare1b}     &     225     &       89     &      253     &      567    \\
     \texttt{values}       &     202     &        1     &      404     &      607    \\
     \texttt{aug3dcqp}     &    3873     &     1000     &     3873     &     8746    \\
     \texttt{aug3dqp}      &    3873     &     1000     &     3873     &     8746    \\
     \texttt{stadat1}      &    2001     &        0     &     5999     &     8000    \\
     \texttt{stadat2}      &    2001     &        0     &     5999     &     8000    \\
     \texttt{mosarqp1}     &    2500     &        0     &     3200     &     5700    \\
     \texttt{mosarqp2}     &     900     &        0     &     1500     &     2400    \\
     \texttt{yao}     &    2000     &        0     &     2001     &     4001    
\end{tabular}
\quad
\begin{tabular}{l|rrrr}
  & \multicolumn{1}{c}{$n$} &   \multicolumn{1}{c}{$m_{eq}$}  &  \multicolumn{1}{c}{$m_{in}$}   &  \multicolumn{1}{c}{$N$}  \\ \hline
     \texttt{aug2dcqp}     &    20200     &     10000     &     20200     &      50400    \\
     \texttt{aug2dqp}      &    20200     &     10000     &     20200     &      50400    \\
     \texttt{cont-050}     &     2597     &      2401     &      5194     &      10192    \\
     \texttt{cont-100}     &    10197     &      9801     &     20394     &      40392    \\
     \texttt{cont-101}     &    10197     &     10098     &     20394     &      40689    \\
     \texttt{cont-200}     &    40397     &     39601     &     80794     &     160792    \\
     \texttt{cont-201}     &    40397     &     40198     &     80794     &     161389    \\
     \texttt{stcqp2}       &     4097     &      2052     &      8194     &      14343    \\
     \texttt{stadat3}      &     4001     &         0     &     11999     &      16000    \\
     \texttt{cvxqp1$\_$l}     &    10000     &      5000     &     20000     &      35000    \\
     \texttt{cvxqp2$\_$l}     &    10000     &      2500     &     20000     &      32500    \\
     \texttt{cvxqp3$\_$l}     &    10000     &      7500     &     20000     &      37500    \\
     \texttt{exdata}       &     3000     &         1     &      7500     &      10501    \\
     \texttt{hues-mod}     &    10000     &         2     &     10000     &      20002    \\
     \texttt{huestis}      &    10000     &         2     &     10000     &      20002    \\
     \texttt{liswet1}      &    10002     &         0     &     10000     &      20002    \\
     \texttt{liswet2}      &    10002     &         0     &     10000     &      20002    \\
     \texttt{liswet3}      &    10002     &         0     &     10000     &      20002    \\
     \texttt{liswet4}      &    10002     &         0     &     10000     &      20002    \\
     \texttt{liswet5}      &    10002     &         0     &     10000     &      20002    \\
     \texttt{liswet6}      &    10002     &         0     &     10000     &      20002    \\
     \texttt{liswet7}      &    10002     &         0     &     10000     &      20002    \\
     \texttt{liswet8}      &    10002     &         0     &     10000     &      20002    \\
     \texttt{liswet9}      &    10002     &         0     &     10000     &      20002    \\
     \texttt{liswet10}     &    10002     &         0     &     10000     &      20002    \\
     \texttt{liswet11}     &    10002     &         0     &     10000     &      20002    \\
     \texttt{liswet12}     &    10002     &         0     &     10000     &      20002    \\
     \texttt{ubh1}         &    17997     &     12000     &     12006     &      42003    
\end{tabular}
\end{center}
\end{tiny}
\end{table}

\bibliography{references,references2}
\bibliographystyle{myplain}

\end{document}